\theoremstyle{plain}
\newtheorem{theorem}{Theorem}[section]
\newtheorem{lemma}[theorem]{Lemma}
\newtheorem{corollary}[theorem]{Corollary}
\newtheorem{proposition}[theorem]{Proposition}
\newtheorem{example}[theorem]{Example}
\theoremstyle{definition}
\newtheorem{definition}[theorem]{Definition}
\newtheorem{remark}[theorem]{Remark}
\newtheorem*{properties*}{Properties}
\newenvironment{definition*}[1][Definition]{\begin{trivlist}
\item[\hskip \labelsep {\bfseries #1}]}{\end{trivlist}}
\numberwithin{equation}{section}
\newcommand{\m}{\mathrm{maj}}
\newcommand{\se}{\mathrm{ne}}
\newcommand{\de}{\mathrm{des}}
\newcommand{\ci}{c_i}
\newcommand{\cj}{c_j}
\newcommand{\aci}{|c_i|}
\newcommand{\acj}{|c_j|}
\newcommand{\s}{\textbf{s}}
\newcommand{\n}{\textbf{F}}
\newcommand{\pone}{ptr_1}
\newcommand{\ptwo}{ptr_2}
\newcommand{\cone}{C_1}
\newcommand{\ctwo}{C_2}
\newcommand{\R}{\mathcal{R}}
\newcommand{\ds}{\text{desseq}}
\newcommand{\M}{\mathcal{M}}
\newcommand{\F}{\mathcal{F}}
\newcommand{\N}{\mathcal{N}}
\begin{document}

\title{Major Index for 01-Fillings of Moon Polyominoes}
\author{
William Y.C. Chen$^{a,1}$, Svetlana~Poznanovi\'{c}$^{b}$, 
Catherine H. Yan$^{a,b,2}$ and \\
Arthur L.B. Yang$^{a,1}$
 \vspace{.2cm} \\
$^{a}$Center for Combinatorics, LPMC-TJKLC\\
Nankai University, Tianjin 300071, P. R. China
\vspace{.2cm} \\
$^{b}$Department of Mathematics\\
Texas A\&M University, College Station, TX 77843 } 
\date{}
\maketitle
\begin{abstract}
We propose a major index statistic on 01-fillings of moon polyominoes which,
when specialized to certain shapes, reduces to the major index for
permutations and set partitions. We consider the set $\n(\M, \s; A)$ of all
01-fillings of a moon polyomino $\M$ with given column sum $\s$ whose empty rows are 
$A$, and prove that this major index has the same
distribution as the number of north-east
 chains, which are the natural extension
of inversions (resp. crossings) for permutations (resp. set partitions). 
Hence our result generalizes the classical equidistribution results for 
the permutation statistics inv and maj. Two proofs are presented. The first is an 
algebraic one using generating functions, and the second is a bijection 
on 01-fillings of moon polyominoes in the spirit of 
Foata's second fundamental transformation on words and permutations.  
\end{abstract}



{\renewcommand{\thefootnote}{}
\footnote{\emph{E-mail addresses}: chen@nankai.edu.cn (W.Y.C.~Chen), 
spoznan@math.tamu.edu (S.~Poznanovi\'{c}), 
cyan@math.tamu.edu (C.H.~Yan), 
yang@nankai.edu.cn (A.L.B.~Yang).} } 
\footnotetext[1]{The first and the fourth authors were
supported by the 973 Project on
Mathematical Mechanization, the PCSIRT Project of
the Ministry of Education, the Ministry
of Science and Technology, and the National Science Foundation of
China.}

\footnotetext[2]{The third author was supported in part by NSF
grant \#DMS-0653846.}

\section{Introduction} \label{S:introduction}

Given a multiset $S$ of $n$ positive integers, a word on $S$ is a sequence $w=w_1w_2 \dots w_n$ that reorders the elements in $S$.
When $S= [n]:=\{1,\dots,n\}$  the word is  a permutation. 
A pair $(w_i, w_j)$ is called an \emph{inversion} of $w$ if $i<j$ and $w_i >w_j$. 
One well-known statistic on words and permutations is $\textrm{inv}(w)$, defined as 
the number of inversions of $w$. 
The \textit{descent set} and \textit{descent statistic} of a word $w$ are defined as
\[\text{Des}(w)=\{i:1 \leq i \leq n-1, w_i>w_{i+1}\}, \hspace{1cm} \text{des}(w)=\# \text{Des}(w).\]
In \cite{Mac} P.~MacMahon defined the \textit{major index} statistic for a word $w$ as
\[ \text{maj}(w)=\sum_{i \in \text{Des}(w)}i, \] 
and showed the remarkable result
that its distribution over all words on $S$ is equal to that of the inversion number 
over the same set.
Precisely, for the set  $W_S$ of all words on $S$, 
\begin{equation} 
\label{eq} \sum_{w \in W_S}{q^{\text{maj}(w)}} =\sum_{w \in
W_S}{q^{\text{inv}(w)}}.\end{equation} The proof of~\eqref{eq} given by MacMahon relied on combinatorial analysis, while an elegant bijection $\Phi:W_S \rightarrow W_S$ such that
$\m(w)=\text{inv}(\Phi(w))$ was constructed later by Foata~\cite{foata}.

A \textit{set partition} of $[n]$ is a family of nonempty
sets $B_1, \dots, B_k$ which are pairwise disjoint and whose union
is $[n]$. If all the blocks $B_i$ have at most two elements, the
set partition is called a \textit{matching}. If the size of every
block is exactly two, the  matching is called  a \textit{perfect matching}. A set
partition $\pi$ can be represented as a simple graph with vertex set
$[n]$ drawn on a horizontal line, where two vertices are joined by
an arc if and only if they represent consecutive elements in the same 
block of $\pi$ under the numerical order. 
Two statistics, the numbers of crossings and nestings, arise naturally from such
graphical representations, and have been the central topic in many recent research articles,
e.g.,\cite{CDDSY},~\cite{kz},~\cite{klazar},~\cite{P},~\cite{PY},~\cite{Riordan},~\cite{deSC},~\cite{touchard}, to list a few.
A \emph{crossing} (resp. \emph{nesting}) of a set partition is a pair of arcs
$(i_1,j_1)$, $(i_2,j_2)$ such that $i_1 < i_2 < j_1 < j_2$ (resp.
$i_1 < i_2 < j_2 < j_1$). A crossing  can be 
viewed as a generalization of an inversion on
permutations. Hence it is natural to ask what would
be the major index for matchings (set partitions) that extends MacMahon's 
equidistribution results \eqref{eq}. A definition of
such a statistic, denoted by pmaj, was proposed by Chen et al.
in~\cite{CGYY}. For completeness, we present the definition for
perfect matchings here. The general case for set partitions is very
similar. Label the arcs of a perfect matching $M$ of $[2n]$ by
$1,\dots,n$ in decreasing order with respect to their left
endpoints from left to right. Suppose $r_1, \dots, r_n$ are the
right endpoints of arcs from right to left. Then to each right
endpoint $r_i$ of an arc associate a word $w^{(i)}$ on $[n]$ where
the words $w^{(i)}$ are defined backwards recursively: let
$w^{(1)}=a$, where $a$ is the label of the arc that ends with
$r_1$. In general, after defining $w^{(i)}$, assume that the left
endpoints of the arcs labeled $a_1, \dots, a_t$ lie between $r_i$
and $r_{i+1}$. Then $w^{(i+1)}$ is obtained from $w^{(i)}$ by
deleting entries $a_1, \dots, a_t$ and adding the entry $b$ at the
very beginning, where $b$ is the label of the arc that ends with
$r_{i+1}$. Then
\[\text{pmaj}(M):=\sum_{i=1}^n \de(w^{(i)}).\] Chen et
al.~\cite{CGYY} computed the generating function for $pmaj$ and showed
that it is equally distributed with the number of crossings over all matchings with fixed left and right endpoints. A similar result holds for set partitions.

The aim of this paper is to extend the major index to certain 01-fillings of moon
polyominoes which include words and set partitions. In
Section~\ref{S:definitions} we introduce the necessary notations, and describe the 
 definition of the major index for 01-fillings of moon polyominoes. 
We explain in Section~\ref{examples} how the classical definition of major index
on words and set partitions can be obtained by considering 
moon polyominoes of special shapes. 
In Section~\ref{S:proof} we show that the maj statistic is equally distributed as ne, the
number of north-east chains, by computing the corresponding generating functions. 
 In the fillings of special shapes, ne corresponds to the
number of inversions for words and crossings for set partitions. Therefore, our main result,
Theorem~\ref{main}, leads to a generalization of~\eqref{eq} and the
analogous result for set partitions. 
In Section~\ref{S:foata},  we present a bijective proof for the equidistribution of 
maj and ne, which consists of three maps.  The first is from the fillings of left-aligned stack polyominoes to itself that sends $\text{maj}$ to $\text{ne}$, constructed in the spirit of Foata's second fundamental transformation.
The other two are maps that transform a moon polyomino to a left-aligned stack polyomino with the same set of columns, while preserving the statistics maj and ne, respectively.  Composing these three maps yields the desired bijection. 


\section{Definition of major index for fillings of moon polyominoes}\label{S:definitions}

A \textit{polyomino} is a finite subset of $\mathbb{Z}^2$, where
we represent every element of $\mathbb{Z}^2$ by a square cell. The
polyomino is \textit{convex} if its intersection with any column or row 
is connected. 
 It is \textit{intersection-free} if every
two columns are comparable, i.e., the row-coordinates of one
column form a subset of those of the other column. Equivalently, it is
intersection-free if every two rows are comparable. A \textit{moon
polyomino} is a convex intersection-free polyomino. If the rows
(resp. columns) of the moon polyomino are left-aligned (resp.
top-aligned), we will call it a \textit{left-aligned stack
polyomino} (resp. \textit{top-aligned stack polyomino}). A
\textit{Ferrers diagram} is  a left-aligned and top-aligned stack
polyomino. See Figure~\ref{moonexample} for an illustration.

\begin{figure}[ht]
\begin{center}
\includegraphics[width=10cm]{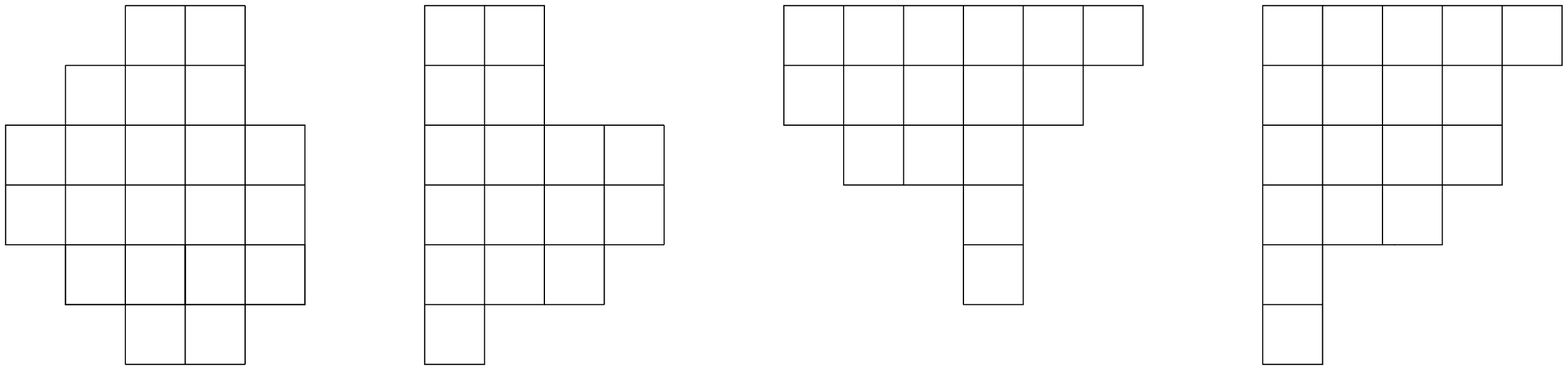}
\end{center}
\caption{A moon polyomino, a left-aligned and a top-aligned stack polyomino, and a Ferrers diagram.}
\label{moonexample}
\end{figure}

We are concerned with 01-fillings of moon
polyominoes with restricted row sums. That is, given a moon polyomino $\M$, we 
assign a $0$ or a $1$ to
each cell of $\M$ so that there is at most one $1$ in each row. 
Throughout this paper we will simply use
the term \textit{filling} to denote such 01-filling. 
We say a cell is empty if it is assigned a $0$, and it is a $1$-cell otherwise. 
Given a
moon polyomino $\M$ with $m$ columns, we label them $c_1,\dots,
c_m$ from left to right . Let $\textbf{s}=(s_1,\dots,s_m)$ be an
$m$-tuple of nonnegative integers and $A$ be a subset of rows of
$\M$. We denote by $\n(\M,\s;A)$ the set of fillings $M$ of
$\M$ such that the empty rows of $M$ are exactly those in $A$ and
the column $c_i$ has exactly $s_i$ many 1's, $1 \leq i \leq m$. A
\textit{north-east (NE) chain} in a filling $M$ of $\M$ is a set of
two $1$-cells such that one of them is strictly above and to the right of the
other and the smallest rectangle containing them is contained in
$\M$. The number of NE chains of $M$ will be denoted $\se(M)$. See Figure~\ref{moon}.

\begin{figure}[ht]
\begin{center}
\includegraphics[width=2.5cm]{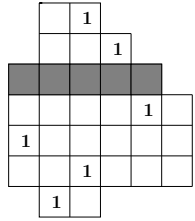}
\end{center}
\caption{Filling $M$ of a moon polyomino $\M$ for $A=\{3\}$ and
$\s=(1,1,2,1,1,0)$ with $\se(M)=5$.} \label{moon}
\end{figure}

The set $\n(\M,\s;A)$ was first studied by A.~Kasraoui \cite{kas}, who showed that the numbers of north-east and south-east chains are equally distributed. The generating functions for 
these statistics are also presented in \cite[Theorem 4.4]{kas}.

To define the major index for fillings of moon polyominoes, we
first state it for rectangular shapes, which is essentially the
classical definition of major index for words, (c.f. Section~\ref{examplerec}).

Let $R$ be a filling of a rectangle whose $n$ \textit{nonempty}
rows $r_1, \dots, r_n$ are numbered from top to bottom. We define
the \textit{descent statistic} for $R$ as
\[\text{des}(R)=|\{i\ |\ \text{ the $1$-cells in rows } r_i \text{
and } r_{i+1} \text{ form an NE chain}\}|.\] That is, descents of a
rectangular filling are NE chains in consecutive nonempty rows.

For each nonempty row $r_i$ of $R$, let $R(r_i)$ denote the
rectangle that contains the row $r_i$ and all rows in $R$ above
the row $r_i$. 

\begin{definition} \label{defrec}
The major index for  the rectangular f\/illing $R$ is defined to be
 \begin{equation} \label{majrec} \m(R)=\sum_{i=1}^{n}{\de(R(r_i))}.\end{equation}
\end{definition}
Clearly, the empty rows of $R$ do not play any role in $\m(R)$.
That is, if we delete them, the major index of the resulting
filling remains the same.

\begin{definition} \label{defmajal} Let $M$ be a filling of a moon polyomino $\M$. Let $\R_1, \dots, \R_r$ be the list of all the maximal rectangles contained in $\M$ ordered increasingly by height, and denote by $R_i$ the filling $M$ restricted on the rectangle $\R_i$. Then the major index of  $M$ is defined to be
 \begin{equation} \label{majmoonal} \m(M)=\sum_{i=1}^{r}{\m(R_i)} - \sum_{i=1}^{r-1}{\m(R_i \cap R_{i+1})}, \end{equation}
where $\m(R_i)$ and $\m(R_i \cap R_{i+1})$ are defined
by~\eqref{majrec}.
\end{definition}

In particular, $\m(M)=\m(R_1)$ when $M$ is a rectangular shape. 
It is also clear that $\m(M)$ is always nonnegative since
$\m(R_{i+1}) \geq \m(R_i \cap R_{i+1})$, $i=1,\dots,r-1$.
\begin{example} Consider the filling from Figure~\ref{moon}. It has four maximal rectangles and $\m(M)=(2+2+1+1)-(2+0+0)=4$.
\begin{figure}[ht]
\begin{center}
\includegraphics[width=11cm]{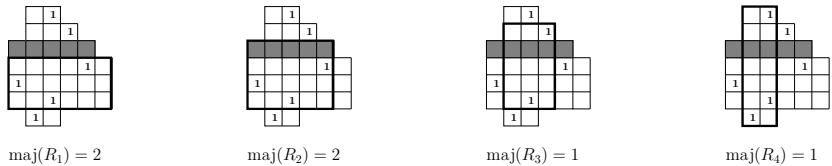}
\end{center}
\caption{Calculation of $\m(M)$ for a moon polyomino using Definition 2.2.}
\label{def1}
\end{figure}
\end{example}

The major index can be equivalently defined in a slightly more complicated way. However, this way is useful in proofs, especially when one uses induction on the number of columns of the filling, 
(c.f. Theorem 4.1). We state this equivalent definition next. First we need some notation. 
If the columns of $\M$
are $c_1, \dots, c_m$ from left to right, it is clear that the
sequence of their lengths is unimodal and there exists a unique
$k$ such that $|c_1| \leq \cdots \leq |c_{k-1}| < |c_k| \geq
|c_{k+1}| \geq \cdots \geq |c_m|$, where $|c_i|$ is the length of
the column $c_i$.  The left part of $\M$, denoted by $Left(\M)$, is
the set of columns $c_i$ with $ 1 \leq i \leq k-1$ and the right
part of $\M$, denoted by $Right(\M)$, is the set of columns $c_i$
with $ k \leq i \leq m$. Note that the columns of maximal length
in $\M$ belong to $Right(\M)$.

We order the columns of $\M$, $c_1, \dots, c_m$, by a total order $\prec$ as follows:
  $c_i \prec c_j$ if and only if
\begin{itemize}
 \item $\aci < \acj$ or
\item $\aci=\acj$, $\ci \in Left(\M)$ and $\cj \in Right(\M)$, or
\item $\aci=\acj$, $\ci, \cj \in Left(\M)$ and $\ci$ is to the left
of $\cj$, or \item $\aci=\acj$, $\ci, \cj \in Right(\M)$ and $\ci$
is to the right of $\cj$.
\end{itemize}
Similar ordering of rows was used in~\cite{kas}.

For every column $\ci \in Left(\M)$ define the rectangle $\M(\ci)$
to be the largest rectangle that contains $\ci$ as the leftmost
 column. For $ \ci \in
Right(\M)$, the rectangle $\M(\ci)$ is  taken to be the largest
rectangle that contains $\ci$ as the rightmost column and does not
contain any columns from $Left(\M)$ of same length as $c_i$.

\begin{definition*}[\textbf{Definition~\ref{defmajal}'.}] Let $c_{i_1} \prec c_{i_2} \prec \cdots \prec c_{i_m}$ be the ordering of the columns of $\M$ and let $M_j$ be the restriction of $M$ on the rectangle $\M(c_{i_j})$, $1 \leq j \leq m$. Then
\begin{equation} \label{majmoon}
\m(M)= \sum_{j=1}^{m}{\m(M_j)} - \sum_{j=1}^{m-1}{\m(M_j \cap M_{j+1})},
\end{equation}
where $\m(M_j)$ and $\m(M_j \cap M_{j+1})$ are defined
by~\eqref{majrec}.
\end{definition*} 

\begin{example} \label{majmoonex} Consider the f\/illing $M$ from Figure~\ref{moon}.
The order $\prec$ on the columns of $\M$ is $c_6 \prec c_1 \prec
c_5 \prec c_4 \prec c_3 \prec c_2$. So, $\M_1=\M(c_6)$,
$\M_2=\M(c_1)$, $\M_3=\M(c_5)$, $\M_4=\M(c_4)$, $\M_5=\M(c_3)$,
and $\M_6=\M(c_2)$, as illustrated in Figure~\ref{moonmaj1}.
By Definition 2.2', $\mathrm{maj}(M)=(2+2+1+1+1+0)-(2+1+0+0+0)=4$.

\begin{figure}[ht]
\begin{center}
\includegraphics[width=11cm]{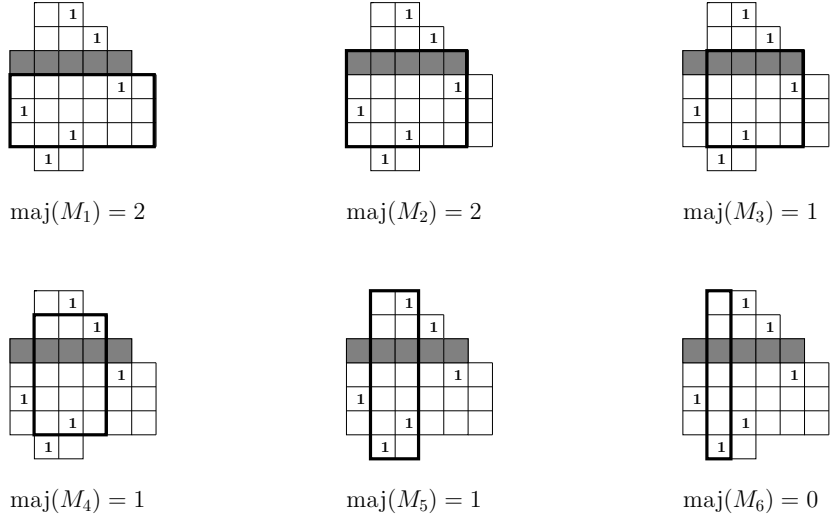}
\end{center}
\caption{Calculation of $\m(M)$ for a moon polyomino using Definition 2.2'.}
\label{moonmaj1}
\end{figure}
\end{example}

\begin{proposition} Definitions~\ref{defmajal} and 2.2' are equivalent.
\end{proposition}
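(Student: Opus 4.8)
The plan is to show that, after a telescoping simplification, the sum defining $\m(M)$ in Definition 2.2' collapses to exactly the sum in Definition~\ref{defmajal}, namely $\sum_{t}\m(R_t)-\sum_{t}\m(R_t\cap R_{t+1})$ taken over the maximal rectangles $\R_1,\dots,\R_r$ ordered by increasing height.

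First I would record the geometry of the two families of rectangles. Every column rectangle $\M(\ci)$ has height exactly $\aci$, and the distinct column lengths of $\M$ are precisely the heights of the maximal rectangles: the maximal rectangle of a given height is the widest rectangle of that height, so it is unique and spans all columns whose length is at least that height. Since $\prec$ orders the columns by increasing length, the rectangles $\M(c_{i_1}),\dots,\M(c_{i_m})$ split into $r$ consecutive blocks according to their common height, the $t$-th block corresponding to $\R_t$; in particular, consecutive blocks correspond to consecutive maximal rectangles $\R_t,\R_{t+1}$, and the tie-breaking rules of $\prec$ do not affect this grouping.

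Next I would analyze a single height block. Fix a height $h$ and let $\R_t$ occupy the columns $[a,b]$ (all columns of length $\ge h$). The columns of length exactly $h$ form a left run and a right run. By the definition of $\M(\ci)$ together with the ordering $\prec$, the left columns (read left to right) give height-$h$ rectangles that shrink from the left, $[a,b],[a+1,b],\dots$, and then the right columns (read right to left) give height-$h$ rectangles that shrink from the right; consecutive rectangles in this chain are nested, so each intersection internal to the block is just the smaller of the two. A direct telescoping of $\sum\m(\M(c_{i_j}))-\sum\m(\M(c_{i_j})\cap\M(c_{i_{j+1}}))$ over the block then collapses to $\m(\R_t)$: the left run telescopes to the full rectangle on $[a,b]$, while the right run cancels against the left-to-right transition term. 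Here I use that a height-$h$ rectangle on a fixed column set is uniquely determined by the moon shape, so equal columns and equal height yield equal fillings and hence equal $\m$.

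Finally, the only intersection terms not absorbed by the block telescopings are those joining one block to the next. I would check that the intersection of the last rectangle in the block for $\R_t$ with the first rectangle in the block for $\R_{t+1}$ is exactly $\R_t\cap\R_{t+1}$: it has height $h$, and its columns are precisely those of length at least the height of $\R_{t+1}$, which is the column span of $\R_{t+1}$. Summing the leftover cross-block terms then reproduces $\sum_t\m(\R_t\cap\R_{t+1})$, so the two definitions agree. The main work lies in the geometric bookkeeping of the last two steps, namely treating the cases in which a given height has only a left run, only a right run, or both, along with the first and last blocks, in order to pin down the exact column spans; once the spans are identified, the telescoping and the matching of the cross-block intersections are routine.
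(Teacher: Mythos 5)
Your argument is correct and follows essentially the same route as the paper's proof: both observe that the rectangles $M_j$ of Definition 2.2' within a fixed column length form a nested decreasing chain headed by the corresponding maximal rectangle $R_t$, so that the sum \eqref{majmoon} telescopes and only the cross-block intersection terms $\m(R_t\cap R_{t+1})$ survive. Your write-up is simply more explicit than the paper's about the column spans and the identification of the cross-block intersections, which the paper leaves as ``after cancellation of some terms.''
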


\begin{proof} 
Some of the rectangular fillings $M_j$ in Definition 2.2' may contain $M_{j+1}$,
 in which case $\m(M_{j+1})-\m(M_j \cap M_{j+1})=0$ and formula~\eqref{majmoon} can be simplified. More precisely, there are uniquely determined
 indices $1=j_1 < j_2 < \cdots < j_r \leq m$ such that $M_1 =M_{j_1} \supseteq M_{2} \supseteq  \cdots \supseteq M_{j_2-1}
 \nsupseteq M_{j_2} \supseteq  \cdots \supseteq M_{j_r-1}
 \nsupseteq M_{j_r}  \supseteq \cdots \supseteq M_{m}$. 
That is, among the rectangle $M_i$'s the maximal ones are $M_{j_1}, M_{j_2},\dots, M_{j_r}$. 
Following the notation in  Definition 2.2, 
the filling $M_{j_k}$ is the filling $R_k$ of the $k$-th maximal rectangle contained in $\M$. Then, after cancellation of some terms, the right-hand side of ~\eqref{majmoon} becomes the right-hand side of~\eqref{majmoonal}.
\end{proof}



\section{$\m(M)$ for special shapes $\M$} \label{examples}

It is well-known that permutations and set-partitions can be
represented as fillings of Ferrers  diagrams (e.g.~\cite{demier}, ~\cite{Krattenthaler}). Here we will describe such presentations to show how to get the classical major index for words and set
partitions from Definition 2.2.

\subsection{When $\M$ is a rectangle: words and permutations} \label{examplerec}
For any rectangle, label the rows from top to bottom, and columns from left to right. 
Fillings of rectangles are in bijection with words. More
precisely, let $w=w_1 \dots w_n$ be a word with letters in  the
set $[m]$. The word $w$ can be represented as a filling $M$ of an
$n \times m$ rectangle $\M$  in which the cell in row $n-i+1$ and 
column $m-j+1$ is assigned the integer $1$ 
if $w_i=j$, and is empty otherwise. Conversely, each filling
of the rectangle $\M$ corresponds to a word $w$. 
It is readily to check that 
$\de(M)=\de(w)$ and $\m(M)=\m(w)$. This follows from the fact that
$\m(w)=\sum_{i=1}^{n}{\de(w_i w_{i+1} \dots w_n})$.

\subsection{When $\M$ is a Ferrers diagram: matchings and set
partitions}

As explained in~\cite{demier}, general fillings of Ferrers
diagrams correspond to multigraphs. Here we briefly describe the
correspondence when restricted to 01-fillings with row sum at
most 1. The Ferrers diagram is bounded by a vertical line from the
left,  a horizontal line from above, and a  path consisting of
east and north steps. Following this path starting from the bottom
left end, we  label the steps of the path by $1,2,\dots,n$.
This gives a labeling of the columns and
rows of the diagram. Draw $n$ vertices on a horizontal line and
draw an edge connecting vertices $i$ and $j$ if and only if there
is a 1 in the cell of the column labeled $i$ and the row labeled
$j$ (see Figure~\ref{ferrers1}). The resulting graph has no loops
or multiple edges and every vertex is a right endpoint of at most
one edge. The NE chains correspond to crossings of edges in the
graph. If the column sums are at most one, the graph is a matching
and the major index of the filling is equal to the major index of
the matching as defined in~\cite{CGYY}. As explained
in~\cite{CGYY}, the case of set partitions can be reduced to that
of matchings. Hence fillings of Ferrers diagrams include set
partitions as well. Restricting to the triangular Ferrers diagram, the 01-fillings 
considered in this paper also include \emph{linked
partitions}, a combinatorial structure that was studied in~\cite{CWY,kas}.

\begin{figure}[ht]
\begin{center}
\includegraphics[width=11cm]{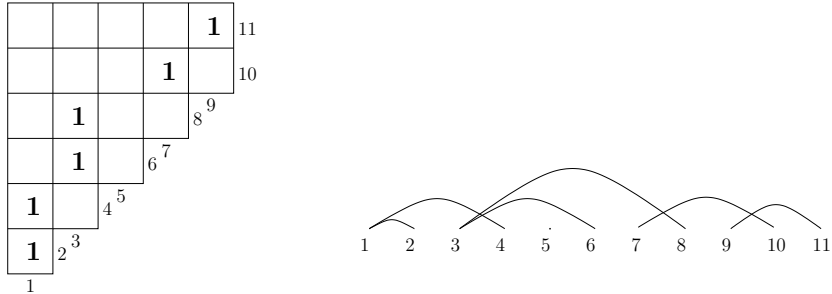}
\end{center}
\caption{A filling of a Ferrers diagram and the corresponding graph.}
\label{ferrers1}
\end{figure}

\subsection{When $\M$ is a top-aligned stack polyomino}

In the case when $\M$ is a top-aligned stack polyomino, $\m(M)$
has a simpler form in terms of the des statistic, just as in words and permutations. 
Explicitly, if the nonempty rows of $M$ are 
$r_1, \dots, r_n$, denote by $M(r_i)$ the filling $M$ restricted to the largest rectangle contained in $\M$ whose bottom row is $r_i$.

\begin{proposition} \label{stackprop} Let $M$ be a filling of a top-aligned stack polyomino and $M(r_i)$ be as defined above. Then
\begin{equation} \label{majstack} \m(M)=\sum_{i=1}^{n}{\de(M(r_i))}. \end{equation}
\end{proposition}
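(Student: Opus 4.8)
The plan is to derive the simplified formula~\eqref{majstack} directly from the general moon-polyomino definition~\eqref{majmoonal}, by using the rigid shape of the maximal rectangles of a top-aligned stack polyomino to collapse the alternating sum via telescoping. First I would record that shape. Because $\M$ is top-aligned and its column lengths are unimodal, for each distinct column length the columns of at least that length form a single contiguous block, and these blocks are nested. Writing $h_1 < h_2 < \cdots < h_r$ for the distinct column lengths, the $j$-th maximal rectangle (ordered by increasing height, as in Definition~\ref{defmajal}) is $\R_j = \{\,c : |c| \geq h_j\,\} \times \{\text{rows } 1,\dots,h_j\}$, and the consecutive intersection is $\R_j \cap \R_{j+1} = \{\,c : |c| \geq h_{j+1}\,\} \times \{\text{rows } 1,\dots,h_j\}$, i.e. the columns of $\R_{j+1}$ truncated to the top $h_j$ rows.

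Next I would expand each term of~\eqref{majmoonal} by the rectangular formula~\eqref{majrec} and regroup the resulting double sum according to the physical row of $M$ that indexes each summand. Fix a nonempty row $r_i$ of $M$, say sitting $d$ rows from the top with its unique $1$ in a column of length $\ell \ (\geq d)$, and let $j_0$ be the least index with $h_{j_0} \geq d$ and $j_1$ the greatest index with $h_{j_1} \leq \ell$. Then $r_i$ occurs as a \emph{nonempty} row of $R_j$ precisely for $j_0 \leq j \leq j_1$, and as a nonempty row of $R_j \cap R_{j+1}$ precisely for $j_0 \leq j \leq j_1 - 1$; in each such rectangle the summand it indexes in~\eqref{majrec} is the descent statistic $\de$ of the truncation obtained by deleting everything below $r_i$. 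The key identity is that truncating $R_j \cap R_{j+1}$ at $r_i$ gives the same rectangular filling as truncating $R_{j+1}$ at $r_i$, since both equal the columns of $\R_{j+1}$ intersected with the top $d$ rows; in the notation of~\eqref{majrec}, $(R_j \cap R_{j+1})(r_i) = R_{j+1}(r_i)$.

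With this identity the total contribution of the single row $r_i$ to~\eqref{majmoonal} is $\sum_{j=j_0}^{j_1} \de(R_j(r_i)) - \sum_{j=j_0}^{j_1-1} \de(R_{j+1}(r_i))$, which telescopes to the single term $\de(R_{j_0}(r_i))$. Finally I would identify this surviving term: $\R_{j_0}$ is the smallest maximal rectangle meeting row $r_i$, its columns are exactly those of length $\geq d$, so $R_{j_0}(r_i)$ is the largest rectangle of $\M$ with bottom row $r_i$, namely $M(r_i)$. Summing the contributions $\de(M(r_i))$ over all nonempty rows then yields~\eqref{majstack}.

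The step I expect to be the genuine obstacle is the index bookkeeping in the middle paragraph, not any conceptual difficulty. Once the column carrying the $1$ of $r_i$ drops out of the narrowing rectangles, that row is still \emph{present} in the taller rectangles but becomes \emph{empty} there and hence stops indexing a summand (the remark following Definition~\ref{defrec}); I must check that the positive range $[j_0,j_1]$ and the negative range $[j_0,j_1-1]$ terminate at matching places, so that the top of the telescope cancels exactly and only the bottom term $\de(R_{j_0}(r_i))$ survives. I would also verify the restriction identity $(R_j\cap R_{j+1})(r_i)=R_{j+1}(r_i)$ at the level of actual fillings, not merely of shapes, and confirm the shape claims for $\R_j$ and $\R_j\cap\R_{j+1}$ from top-alignment together with unimodality. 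After these checks the collapse to~\eqref{majstack} is immediate.
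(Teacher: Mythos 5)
Your argument is correct, but it takes a genuinely different route from the paper's. The paper proves Proposition 3.1 by induction on the number of rows: it strips off the block $r_{k+1},\dots,r_n$ of bottom nonempty rows having the same length as $r_n$, writes $\m(M)=\m(M')+\m(R)-\m(M'\cap R)$ for $R$ the largest rectangle containing $r_n$ via Definition 2.2, and invokes the induction hypothesis on $M'$ together with the rectangular formula~\eqref{majrec} for $R$. You instead expand the full alternating sum~\eqref{majmoonal} over all maximal rectangles at once, regroup the summands of~\eqref{majrec} by the physical row of $M$ indexing them, and cancel: the identity $(R_j\cap R_{j+1})(r_i)=R_{j+1}(r_i)$ (valid because all rectangles in a top-aligned stack polyomino share the top edge, so truncation at depth $d$ depends only on the column set) makes each row's contribution collapse to $\de(R_{j_0}(r_i))=\de(M(r_i))$. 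Your index bookkeeping checks out: the nonempty-row ranges $[j_0,j_1]$ for the $R_j$ and $[j_0,j_1-1]$ for the $R_j\cap R_{j+1}$ do match up so that exactly the bottom term survives, and $\{c:|c|\ge h_{j_0}\}=\{c:|c|\ge d\}$ because every column length is one of the $h_j$. What your approach buys is an explicit, non-inductive accounting of where every descent in the inclusion--exclusion formula goes, at the cost of heavier notation; the paper's induction is shorter but leaves to the reader the verification that Definition 2.2 yields the displayed three-term recursion for the truncated polyomino $M'$.
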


\begin{proof} 
Let $r_{k+1}, \dots, r_n$ be all the nonempty rows of $M$ that are of same length as the last row $r_n$. Denote by $M' = M \backslash \{r_{k+1}, \dots, r_n\}$ and by $R$  the filling of the largest rectangle of $\M$ containing $r_n$. We proceed by induction on the number $n$ of rows of $M$. The claim is trivial when $M$ has only one row or is a rectangle. Otherwise,  
\begin{align*}
\m(M)&=\m(M')+\m(R)-\m(M' \cap R)  &\text{ (Definition~\ref{defmajal})}\\
&=\sum_{i=1}^{k}\de(M(r_i)) +\sum_{i=k+1}^{n}\de(M(r_i)), &\text{ (Induction hypothesis and Definition~\ref{defrec})}
\end{align*}
which completes the proof.
\end{proof}


\begin{example} Consider the filling $M$ of a top-aligned stack polyomino in Figure~\ref{stackmaj} with $A=\{3\}$ and $\s=(0,1,2,1,1)$.
 $M$ has five nonempty rows and, by Proposition~\ref{stackprop}, $\m(M)=\sum_{i=1}^{5}\de(M(r_i))=0+1+0+1+1=3$.

\begin{figure}[ht]
\begin{center}
\includegraphics[width=14cm]{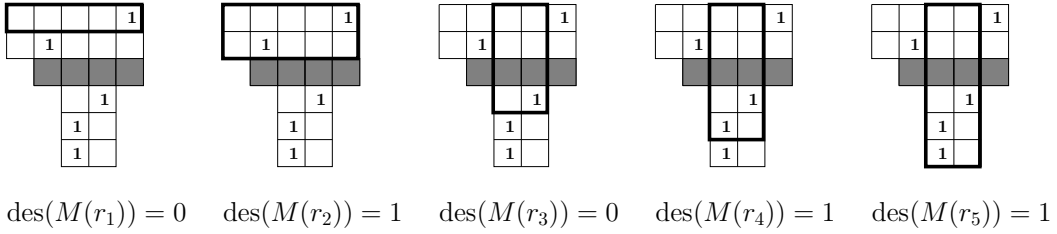}
\end{center}
\caption{Calculation of $\m(M)$ using Proposition~\ref{stackprop}.} \label{stackmaj}
\end{figure}
\end{example}

Proposition 3.1 will be used in the proof of Lemma 5.5.

\section{The generating function for the major index}\label{S:proof}

In this section we state and prove the main theorem which gives the generating function 
for maj over $\n(\M,\s;A)$, the set of all fillings of the moon polyomino $\M$
with column sums given by the integer sequence $\s$ and empty rows
given by $A$ . Let $|c_1|, \dots, |c_m|$ be the column lengths of
$M$ and let $a_i$ the number of rows in $A$ that intersect the
column $c_i$. Suppose that, under the ordering defined in
Section~\ref{S:definitions}, $c_{i_1} \prec c_{i_2} \prec \cdots
\prec c_{i_m}$. Then for $j=1, \dots, m$, define
\begin{equation} \label{hi}
h_{i_j}=|c_{i_j}|-a_{i_j} -(s_{i_1}+s_{i_2}+ \cdots +
s_{i_{j-1}})\end{equation}
The numbers $h_i$ have the following
meaning: if one fills in the columns of $M$ from smallest to
largest according to the order $\prec$, then $h_{i_j}$ is the number of
available cells in the $j$-th column  to be filled.

\begin{theorem} \label{main} For a moon polyomino $\M$ with $m$ columns,
\begin{equation} \label{maineq} \sum_{M \in \n(\M,\s;A)} q^{\m(M)}=\prod_{i=1}^{m}
\genfrac[]{0pt}{}{h_i}{s_i}_q \end{equation}
where $h_i$ is
defined by~\eqref{hi}.
\end{theorem}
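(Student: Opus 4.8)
The plan is to prove the theorem by induction on $m$, the number of columns of $\M$, using the equivalent Definition~2.2' of the major index, which is tailored precisely for this kind of column-by-column argument. The induction will build the filling by inserting columns one at a time in increasing order with respect to $\prec$, so that at each stage we add the next-smallest column and track how the major index changes. The base case $m=1$ is a rectangular (single-column) filling, where the statement reduces to the classical $q$-binomial identity $\sum q^{\m(M)} = \genfrac[]{0pt}{}{h_1}{s_1}_q$; this is essentially MacMahon's result in its simplest guise, since a single column with $s_1$ ones among $h_1$ available cells is counted by the Gaussian binomial coefficient graded by the positions of the ones.

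For the inductive step, suppose the columns in $\prec$-order are $c_{i_1} \prec \cdots \prec c_{i_m}$, and let $c_{i_m}$ be the $\prec$-largest column. I would like to remove $c_{i_m}$ (or, symmetrically, add it last), obtaining a smaller moon polyomino $\M'$ on which the inductive hypothesis applies with the appropriate reduced data $\s'$ and $A'$. The key quantity to control is the contribution that inserting $c_{i_m}$ makes to $\m(M)$. Using Definition~2.2', the terms involving $M_m = M$ restricted to $\M(c_{i_m})$ are $\m(M_m) - \m(M_{m-1} \cap M_m)$, and I would argue that summing $q$ to this incremental contribution over all ways of placing the $s_{i_m}$ ones in the $h_{i_m}$ available cells of $c_{i_m}$ yields exactly the factor $\genfrac[]{0pt}{}{h_{i_m}}{s_{i_m}}_q$, independently of the filling already chosen on $\M'$. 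Concretely, I expect that once the rest of the filling is fixed, the incremental major index, as a function of which $s_{i_m}$ of the $h_{i_m}$ cells are filled, behaves like the major index of a single column, so its generating function factors off as the desired Gaussian binomial.

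The main obstacle will be proving this factorization cleanly: one must show that the change in $\m$ caused by inserting the ones in the new column depends only on the \emph{relative} positions of those ones among the $h_{i_m}$ available cells, and not on the detailed configuration of the already-placed ones in $\M'$, so that the sum over placements in $c_{i_m}$ decouples from the sum over fillings of $\M'$. This requires a careful analysis of how the descent statistic $\de(R(r_i))$ of Definition~\ref{defrec} accumulates across the nested rectangles $R(r_i)$ when a new rightmost or leftmost column is adjoined, and of exactly which NE chains are created. The definition of $h_{i_j}$ in~\eqref{hi} as the number of available cells is what makes this bookkeeping work: each already-filled larger column blocks one cell (contributing the subtracted partial sum $s_{i_1}+\cdots+s_{i_{j-1}}$), and the empty rows in $A$ account for the subtracted $a_{i_j}$. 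Once the decoupling is established, the product formula follows immediately by multiplying the inductive product for $\M'$ by the new factor, completing the induction.
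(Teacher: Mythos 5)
Your overall strategy is the same as the paper's --- induct on the number of columns using Definition~2.2$'$, peel off one column, and show that its contribution factors off as a Gaussian binomial --- but you peel the wrong column, and this breaks the argument. You remove the $\prec$-largest column $c_{i_m}$, which is a column of maximal length and in general sits in the \emph{interior} of $\M$. Two things then go wrong. First, $\M \setminus c_{i_m}$ is not a moon polyomino without re-gluing the two halves, and re-gluing changes which rectangles are contained in the shape. Second, and more fatally, the rectangles $\M(c_{i_j})$ for $j<m$ typically \emph{contain} the tallest column $c_{i_m}$ (e.g.\ $\M(c_{i_1})$ spans the full width of $\M$ at height $|c_{i_1}|$), so deleting $c_{i_m}$ perturbs every term $\m(M_j)$, $j<m$, in~\eqref{majmoon}, not just the last one; hence $\m(M)-\m(M')$ is \emph{not} equal to $\m(M_m)-\m(M_{m-1}\cap M_m)$ and your telescoping fails. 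The paper instead removes the $\prec$-\emph{smallest} column $c$, which is always the geometrically leftmost or rightmost column of $\M$: its removal leaves a moon polyomino, the rectangles attached to the remaining columns are unaffected, and Definition~2.2$'$ cleanly gives $\m(M)=\m(M')+\m(R_c)-\m(R_c\cap M')$ with $R_c$ the restriction of $M$ to $\M(c)$.

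The second gap is that the decoupling statement you propose to prove is false as stated, and the correct statement --- the real content of the inductive step --- is not supplied. It is not true that the change in $\m$ ``depends only on the relative positions of the new ones among the $h$ available cells and not on the already-placed ones'': inserting a single largest letter into the words $12$ and $21$ at the same gap yields increments $(1,2,0)$ and $(2,1,0)$ respectively, so individual increments do depend on the base configuration. What is true, and what the paper isolates as Lemma~4.6, is that the \emph{multiset} of increments over all $\binom{h}{s}$ placements equals $\{i_1+\cdots+i_s \mid 0\le i_1\le\cdots\le i_s\le h-s\}$ independently of the underlying word; this weaker invariance is exactly what lets the factor $\genfrac[]{0pt}{}{h}{s}_q$ split off. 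Proving it requires a genuine argument (the paper analyzes the descent sequence and shows invariance under adjacent transpositions, or alternatively invokes $P$-partitions). Relatedly, your base case is misdescribed: a single-column filling has no NE chains, so $\m\equiv 0$ there; the set $\n(\M,\s;A)$ is a single forced filling with $s_1=h_1$, and both sides equal $1$ --- there is no ``grading by the positions of the ones'' in an isolated column, which is a symptom of the same misconception about where the $q$-weight comes from.
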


We postpone the proof of Theorem~\ref{main} until the end of this
section.

\begin{example}
Suppose $\M$ is the first moon polyomino in
Figure~\ref{moonexample}, $A=\{5\}$, and $\s=(1,0,2,1,1)$. The
$\prec$ order on the columns of $\M$ is: $c_1 \prec c_5 \prec c_2
\prec c_4 \prec c_3$. The fifth row intersects all columns except
$c_1$, so, $a_1=0$ and $a_2 = a_3 = a_4 = a_5 = 1$. Therefore,
\begin{align*}h_1&= h_{i_1}= |c_1| - a_1 = 2 -0 =2\\
h_5&= h_{i_2}= |c_5| - a_5 -s_1= 3 -1 - 1=1\\
h_2&= h_{i_3}= |c_2| - a_2 -(s_1+s_5)= 4 -1 -(1+1) =1\\
h_4&= h_{i_4}= |c_4| - a_4 -(s_1+s_5+s_2)= 6 -1 -(1+1+0)=3\\
h_3&= h_{i_5}=|c_3| - a_3 -(s_1+s_5+s_2+s_4)= 6 -1 -(1+1+0+1) =2
\end{align*}
By Theorem~\ref{main},
$\sum_{M \in \n(\M,\s;A)} q^{\m(M)}=\prod_{i=1}^{m}
\genfrac[]{0pt}{}{h_i}{s_i}_q= \genfrac[]{0pt}{}{2}{1}_q \genfrac[]{0pt}{}{1}{0}_q\genfrac[]{0pt}{}{2}{2}_q\genfrac[]{0pt}{}{3}{1}_q \genfrac[]{0pt}{}{1}{1}_q =(1+q)(1+q+q^2)=1+2q+2q^2+q^3$. The fillings in $\n(\M,\s;A)$ are listed in Figure~\ref{theoremexample}.
\begin{figure}[ht]
\begin{center}
\includegraphics[width=9cm]{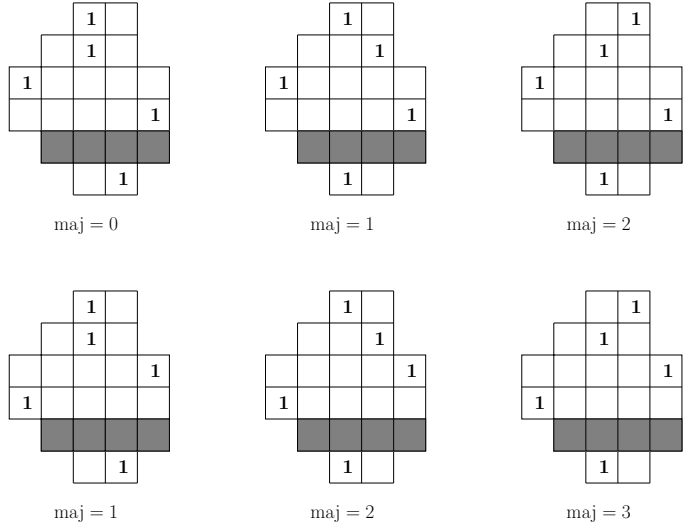}
\end{center}
\caption{Illustration of Theorem~\ref{main}.}
\label{theoremexample}
\end{figure}
\end{example}

\begin{corollary} Let $\sigma$ be a permutation of $[m]$ and let $\M$ be a moon polyomino with columns $c_1, \dots, c_m$. Suppose the shape $\N$ with columns $c_i'=c_{\sigma(i)}$ is also a moon polyomino and $\s'= (s_1', \dots, s_m')$ with $s_i'=s_{\sigma(i)}$. Then
\begin{equation}
\sum_{M \in \n(\M,\s;A)} q^{\m(M)}= \sum_{N \in \n(\N,\s';A)} q^{\m(N)}.
\end{equation}
That is, the generating function $\sum_{M \in \n(\M,\s;A)} q^{\m(M)}$ does
not depend on the order of the columns of $\M$.
\end{corollary}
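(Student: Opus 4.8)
The plan is to read this corollary as an immediate consequence of Theorem~\ref{main}, by observing that the right-hand side of the generating function formula~\eqref{maineq} is manifestly independent of the order in which the columns are arranged, as long as the multiset of column lengths and the corresponding column sums are preserved. The first step is to note that both $\M$ and $\N$ are moon polyominoes built from exactly the same multiset of columns (since $c_i' = c_{\sigma(i)}$ simply permutes them), and that the empty-row set $A$ and the column-sum data are transported consistently via $\sigma$. Hence the hypotheses of Theorem~\ref{main} apply to both $\n(\M,\s;A)$ and $\n(\N,\s';A)$, and each generating function equals a product of Gaussian binomial coefficients $\genfrac[]{0pt}{}{h_i}{s_i}_q$.

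The crux of the argument is then to verify that the two products agree. First I would compute the exponents $h_i$ for $\N$ using the recipe~\eqref{hi}, and show that the product $\prod_i \genfrac[]{0pt}{}{h_i'}{s_i'}_q$ obtained for $\N$ equals the product $\prod_i \genfrac[]{0pt}{}{h_i}{s_i}_q$ for $\M$. Since multiplication of polynomials is commutative, the product over all columns does not care about the labeling; the only thing that could differ is the \emph{value} of each $h_i$, which depends on the order $\prec$ through the partial sum $s_{i_1} + \cdots + s_{i_{j-1}}$ of column sums of all columns strictly preceding a given one. The key observation here is that the order $\prec$ defined in Section~\ref{S:definitions} is intrinsic to the moon polyomino — it depends only on the column lengths and the left/right partition, not on any external labeling — so a given column (as an abstract object with a fixed length and a fixed number $a_i$ of empty rows passing through it) receives the same value of $h_i$ whether it sits in $\M$ or in the reordered shape $\N$. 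More precisely, I would argue that the function assigning to each column its pair $(h_i, s_i)$ is determined by the unordered collection of columns together with $\s$ and $A$, and is therefore invariant under $\sigma$.

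The main obstacle I anticipate is making the invariance of the $h_i$ fully rigorous, because $\prec$ is not quite a function of the length alone: among columns of equal length it distinguishes left-part from right-part columns and breaks further ties by geometric position. I would handle this by checking that, for the purpose of the partial-sum count in~\eqref{hi}, two columns of the same length contribute symmetrically — filling them in either relative order yields the same set of available-cell counts as a multiset — and that the left/right distinction is preserved because reordering the columns into $\N$ preserves which columns attain the maximal length and hence preserves the unimodal structure underlying $Left$ and $Right$. Once the multiset $\{(h_i, s_i)\}_i$ is shown to be a $\sigma$-invariant of the data $(\{c_i\}, \s, A)$, the equality of the two products is immediate and the corollary follows. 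The closing remark that the generating function is independent of column order is then just a restatement, obtained by letting $\sigma$ range over all permutations for which $\N$ remains a moon polyomino.
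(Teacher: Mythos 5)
Your overall strategy --- apply Theorem~\ref{main} to both $\M$ and $\N$ and show that the two products of Gaussian binomials coincide --- is the same as the paper's, but the invariance claim on which you hang the comparison is false, and so is the fix you propose for it. It is not true that ``the function assigning to each column its pair $(h_i,s_i)$ is determined by the unordered collection of columns,'' nor that swapping two equal-length columns ``yields the same set of available-cell counts as a multiset.'' Concretely: if two columns adjacent in the $\prec$-order have the same length and the same $a$-value but column sums $s$ and $t$ with $s\neq t$, and $h$ denotes the number of available cells when the first of them is reached, then in one relative order the pair of $h$-values is $\{h,\,h-s\}$ while in the other it is $\{h,\,h-t\}$; these multisets differ, so the individual factors $\genfrac[]{0pt}{}{h_i}{s_i}_q$ genuinely change under the permutation. (Only the \emph{first} $h$-value of each equal-length block is invariant, since it involves just the total of the $s_i$ over strictly shorter columns; the later ones are not.)

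What saves the corollary is not multiset invariance of the $(h_i,s_i)$ but an identity of products: the factors coming from one block of equal-length columns telescope,
\[
\prod_{r=1}^{k}\genfrac[]{0pt}{}{h_{i_{j+r}}}{s_{i_{j+r}}}_q
=\genfrac[]{0pt}{}{h_{i_{j+1}}}{s_{i_{j+1}},\dots,s_{i_{j+k}},\,h_{i_{j+1}}-(s_{i_{j+1}}+\cdots+s_{i_{j+k}})}_q,
\]
and the $q$-multinomial coefficient on the right is symmetric in $s_{i_{j+1}},\dots,s_{i_{j+k}}$. This is the step the paper uses and the one your write-up is missing; in the two-column example it amounts to $\genfrac[]{0pt}{}{h}{s}_q\genfrac[]{0pt}{}{h-s}{t}_q=\genfrac[]{0pt}{}{h}{t}_q\genfrac[]{0pt}{}{h-t}{s}_q$, which holds even though no factor on the left equals a factor on the right. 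Two smaller points you should also nail down: (i) $a_i$ depends only on $|c_i|$, which is true because $\M$ is intersection-free, so equal-length columns meet exactly the same rows; and (ii) a non-maximal column may migrate between $Left$ and $Right$ under $\sigma$, so the $\prec$-order within a block of equal-length columns can be permuted arbitrarily --- harmless once you have the multinomial symmetry, but incompatible with your claim that each individual column retains its $h$-value.
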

\begin{proof}
The formula~\eqref{maineq} seems to depend on the order $\prec$ of the columns of the moon polyomino. However, note that $c_i \prec c_j$ if $|c_i| <|c_j|$; and 
columns of same length are consecutive in the order $\prec$.  Hence it suffices to compare the terms in the right-hand side of~\eqref{maineq} that come from columns of fixed length. If $c_{i_{j+1}} \prec \cdots \prec c_{i_{j+k}}$ are all the columns of $\M$ of fixed length, then 
\begin{equation}
h_{i_{j+r}}=h_{i_{j+1}}-(s_{i_{j+1}}+\cdots+ s_{i_{j+r-1}}), \; 2 \leq r \leq k.
\end{equation}
Thus the contribution of these columns to the right-hand side of~\eqref{maineq} is
\begin{equation} \prod_{r=1}^k \genfrac[]{0pt}{}{h_{i_{j+r}}}{s_{i_{j+r}}}_q = \genfrac[]{0pt}{}{h_{i_{j+1}}}{s_{i_{j+1}}, \dots, s_{i_{j+k}}, h_{i_{j+1}}- (s_{i_{j+1}}+ \cdots + s_{i_{j+k}}) }_q.
\end{equation}
The last q-multinomial coefficient is invariant under permutations of $s_{i_{j+1}}, \dots, s_{i_{j+k}}$, and the claim follows.

\end{proof}

The right-hand side of formula~\eqref{maineq} also appeared in 
the generating function of NE chains, as shown in~\cite{kas}:

\begin{theorem}[Kasraoui]\label{kastheorem}
\[\sum_{M \in \n(\M,\s;A)} q^{\se(M)}=\prod_{i=1}^{m} \genfrac[]{0pt}{}{h_i}{s_i}_q. \]
\end{theorem}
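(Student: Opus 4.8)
The plan is to prove the identity by running exactly the column-filling procedure that defines the numbers $h_i$: process the columns in the order $c_{i_1}\prec c_{i_2}\prec\cdots\prec c_{i_m}$, from smallest to largest, and at stage $j$ choose which $s_{i_j}$ of the $h_{i_j}$ currently available cells of $c_{i_j}$ receive a $1$. Because every non-empty row of a filling in $\n(\M,\s;A)$ carries exactly one $1$, such a filling is the same data as a sequence of these choices, and the number of available cells at stage $j$ is always $h_{i_j}$ no matter what was chosen earlier. The key idea is to attribute every NE chain to the $\prec$-\emph{smaller} of the two columns containing its two cells --- the two cells always lie in distinct columns, since two cells of a single column are never strictly one to the right of the other --- so that $\se(M)=\sum_{j}X_j$, where $X_j$ counts the NE chains whose smaller column is $c_{i_j}$, i.e. those formed by a cell of $c_{i_j}$ together with a cell of some $\prec$-larger column.

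First I would record a geometric fact that follows directly from the definition of $\prec$: all columns $\prec$-larger than a fixed column $c$ lie on one side of $c$ --- to its right if $c\in Left(\M)$ and to its left if $c\in Right(\M)$. Combined with the moon (convex, intersection-free) property this yields two consequences I would establish next. First, if $c\prec c'$ then the rows meeting $c$ are a subset of the rows meeting $c'$, and for any two such rows the rectangle spanned by the corresponding cells of $c$ and $c'$ lies in $\M$; hence whether two such cells form an NE chain is decided by their relative position alone. Second, every available cell of $c$ that does not receive a $1$ is eventually filled by a $\prec$-larger column, necessarily on the side indicated above. Together these say that, for $c=c_{i_j}\in Left(\M)$, a $1$ of $c$ in row $r$ contributes one NE chain for each available row strictly above $r$ that $c$ leaves empty (the symmetric statement, with ``below'', holds for $c\in Right(\M)$), and that this count is insensitive to which larger columns eventually fill those rows.

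Consequently $X_j$ depends only on the choice made at stage $j$: labelling the $h_{i_j}$ available cells from top to bottom and recording the positions of the $s_{i_j}$ chosen ones, $X_j$ equals the number of pairs (chosen cell, empty available cell above it), an inversion statistic on a binary word whose generating function over all $\binom{h_{i_j}}{s_{i_j}}$ choices is exactly $\genfrac[]{0pt}{}{h_{i_j}}{s_{i_j}}_q$ (by the palindromy of the $q$-binomial the ``below'' version for $Right(\M)$ gives the same polynomial). Summing $q^{\se(M)}=\prod_j q^{X_j}$ over all fillings stage by stage, the inner sum over the stage-$j$ choice is $\genfrac[]{0pt}{}{h_{i_j}}{s_{i_j}}_q$ regardless of the earlier choices, so the whole sum factors into $\prod_{j}\genfrac[]{0pt}{}{h_{i_j}}{s_{i_j}}_q=\prod_{i=1}^m \genfrac[]{0pt}{}{h_i}{s_i}_q$.

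The main obstacle is exactly the uniformity asserted in the second paragraph: I must be certain that an empty available row of $c$ contributes to $X_j$ in a way independent of \emph{which} larger column fills it, and that no NE chain between $c$ and a larger column is missed or double-counted. This is where the moon structure is indispensable --- it forces all $\prec$-larger columns onto a single side of $c$, so the NE-versus-SE dichotomy is determined by the row alone, and it guarantees that the spanning rectangle lies in $\M$, so each counted chain is genuinely present. Once this independence is pinned down, the reduction of $X_j$ to an inversion statistic and the stage-by-stage factorization are routine.
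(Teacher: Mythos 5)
The paper does not actually prove this theorem; it is quoted from Kasraoui~\cite{kas}, so there is no internal proof to compare yours against. Your argument is, as far as I can check, correct and self-contained, and it is essentially the counting argument underlying Kasraoui's result; it is also the exact $\se$-counterpart of the paper's proof of Theorem~\ref{main}, which likewise works through the columns in the order $\prec$ (there by deleting the $\prec$-smallest column and invoking Lemma~\ref{perm} on word insertion, here by filling columns from $\prec$-smallest to $\prec$-largest and attributing each NE chain to its $\prec$-smaller column). The two geometric facts you isolate are the right ones, and both do follow from unimodality of the column lengths together with convexity and intersection-freeness: every column $\succ c$, and every column positionally between it and $c$, contains all rows of $c$, so the spanning rectangle of a cell of $c$ and a cell of a $\prec$-larger column lying in a row of $c$ is automatically in $\M$; conversely a $1$ of a larger column in a row not meeting $c$ can never complete an NE chain with a cell of $c$, since the corner of the spanning rectangle in column $c$ is missing. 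Together these give both directions of ``no chain missed or double-counted,'' and the reduction of $X_j$ to $\sum_t (p_t - t)$ over the positions $p_1<\cdots<p_{s_{i_j}}$ of the chosen cells among the $h_{i_j}$ available ones, with generating function $\genfrac[]{0pt}{}{h_{i_j}}{s_{i_j}}_q$, plus the stage-by-stage factorization, are routine as you say. One caveat, inherited from the theorem statement itself (and shared by Theorem~\ref{main}): the identification of fillings in $\n(\M,\s;A)$ with choice sequences requires $\sum_i s_i$ to equal the number of rows not in $A$; otherwise the left-hand side is an empty sum while the right-hand side need not vanish. You should state this nondegeneracy assumption explicitly where you claim that ``such a filling is the same data as a sequence of these choices.''
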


\noindent Combining Theorems~\ref{main} and~\ref{kastheorem}, we have

\begin{theorem}\label{equal}
\[\sum_{M \in \n(\M,\s;A)} q^{\m(M)}=\sum_{M \in \n(\M,\s;A)} q^{\se(M)}. \]
\end{theorem}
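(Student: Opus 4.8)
The plan is to prove Theorem~\ref{main} by induction on the number of columns $m$, filling in the columns in the order $\prec$ from smallest to largest, and tracking how the statistic $\m$ changes with each added column. The key structural fact I would rely on is Definition~2.2', which is tailored precisely for this induction: the ordering $\prec$ tells us which rectangle $\M(c_{i_j})$ to associate with each column, and the telescoping form of~\eqref{majmoon} lets us isolate the contribution of the last column $c_{i_m}$. Since this theorem is stated earlier in the excerpt, I can combine it with Kasraoui's Theorem~\ref{kastheorem} to obtain the final equidistribution statement Theorem~\ref{equal} immediately; so the real work is all in Theorem~\ref{main}, and Theorem~\ref{equal} itself is a one-line consequence.

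For the inductive step, I would remove the $\prec$-largest column $c_{i_m}$ and apply the induction hypothesis to the smaller moon polyomino $\M'$ on the remaining $m-1$ columns, whose product formula is $\prod_{j=1}^{m-1}\genfrac[]{0pt}{}{h_{i_j}}{s_{i_j}}_q$. The target is to show that summing $q^{\m(M)}$ over all ways of placing the $s_{i_m}$ ones in the $h_{i_m}$ available cells of the last column multiplies this product by exactly $\genfrac[]{0pt}{}{h_{i_m}}{s_{i_m}}_q$. The interpretation of $h_{i_j}$ given right after~\eqref{hi}---namely that it counts the available cells when filling columns from smallest to largest---is the combinatorial anchor here: after the $s_{i_1}+\cdots+s_{i_{m-1}}$ ones in the earlier columns occupy their rows, and after discarding the $a_{i_m}$ empty rows forced by $A$, exactly $h_{i_m}$ cells of $c_{i_m}$ remain open. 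So I would fix the filling $M'$ on $\M'$ and analyze $\m(M)-\m(M')$ as the single new $1$-cell of the last column (or each of its $s_{i_m}$ ones) ranges over the available positions.

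The crux is to show that this increment behaves like the $q$-Gaussian recursion. Concretely, I expect that if the new column contributes its ones in the available cells, the generating function over the position of those ones, relative to the fixed earlier filling, is governed by how many NE-chain-type descents the new cell creates with the cells above it inside the relevant rectangle $\M(c_{i_m})$. Using Proposition~\ref{stackprop} (the des-sum formula for top-aligned stacks, which the excerpt flags as needed for the Lemma~5.5 referenced in the proof) I would express $\m$ restricted to the relevant rectangle as a sum of $\de(M(r_i))$ terms, and then show that sliding the new $1$-cell down past each previously filled cell increments $\m$ by exactly one, producing the factor $1+q+\cdots+q^{h_{i_m}-s_{i_m}}$ style contributions that assemble into $\genfrac[]{0pt}{}{h_{i_m}}{s_{i_m}}_q$. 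The subtraction term $\m(M_j\cap M_{j+1})$ in~\eqref{majmoon} is what makes this clean: it cancels the double-counting between consecutive rectangles so that only the genuinely new descents survive.

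The main obstacle I anticipate is controlling the interaction between the last column and the boundary between $Left(\M)$ and $Right(\M)$: the order $\prec$ treats equal-length columns on the two sides oppositely (left-to-right on the left, right-to-left on the right), and the definition of $\M(c_i)$ for right columns explicitly excludes left columns of the same length. I would need to verify carefully that, regardless of whether $c_{i_m}$ is a left or right column and regardless of how many equal-length columns precede it, the count of available cells is exactly $h_{i_m}$ and the increment-by-one behavior of $\m$ holds uniformly. Checking that the telescoping in~\eqref{majmoon} interacts correctly with these cases---so that adding $c_{i_m}$ never disturbs the already-accounted contributions of the smaller columns---is where the bookkeeping is genuinely delicate, and this is the step I would devote the most care to. Once Theorem~\ref{main} is established, Theorem~\ref{equal} follows at once by equating its right-hand side with that of Theorem~\ref{kastheorem}.
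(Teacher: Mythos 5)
Your top-level reduction is exactly the paper's: Theorem~\ref{equal} is obtained by placing Theorem~\ref{main} next to Kasraoui's Theorem~\ref{kastheorem} and observing that the two product formulas coincide, so if you are allowed to cite Theorem~\ref{main} you are done in one line. But you rightly identify that the real work is Theorem~\ref{main}, and your sketch of it has a genuine gap: the induction runs in the wrong direction. You propose to peel off the $\prec$-\emph{largest} column $c_{i_m}$. That column has maximal length, hence lies in $Right(\M)$, and by the right-to-left ordering on $Right(\M)$ it is the column $c_k$ at the peak of the unimodal length sequence --- in general an interior column. Every row of $\M$ meets $c_k$, so deleting it disconnects the rows: the remaining shape is not a moon polyomino and the induction hypothesis does not apply to it. Nor does the telescoping in~\eqref{majmoon} isolate its contribution, since every rectangle $\M(c_{i_j})$ may contain cells of $c_k$, so all terms change when it is removed. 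The paper instead deletes the $\prec$-\emph{smallest} column $c$, which is necessarily $c_1$ or $c_m$, i.e., a physically extremal column; then $\M\setminus c$ is again a moon polyomino and Definition 2.2' cleanly gives $\m(M)=\m(M')+\m(R_c)-\m(R_c\cap M')$ with $R_c$ the filling of the rectangle $\M(c)$.

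Second, your mechanism for producing the Gaussian coefficient --- ``sliding the new $1$-cell down past each previously filled cell increments $\m$ by exactly one'' --- is false. Already for the word $w=12$, inserting a maximal letter into the three gaps changes $\m$ by $1,2,0$ respectively: the increments form the multiset $\{0,1,\dots,k\}$ but are not monotone in the insertion position. Controlling this is precisely the content of Lemma~\ref{perm}, whose proof analyzes descent sequences and shows that the increment multiset is invariant under suitable adjacent transpositions; your sketch never invokes this lemma and offers no substitute. (Proposition~\ref{stackprop}, which you cite instead, only rewrites $\m$ for top-aligned stacks and says nothing about how $\m$ varies as the new $1$-cells move.) Without the correct choice of column to remove and without Lemma~\ref{perm} or an equivalent, the inductive step does not go through.
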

\noindent That is, the maj statistic  has the same distribution as the ne statistic over the set
$\n(\M, \s, A)$.  Since $\se(M)$ is the natural 
analogue of inv for words and permutations, Theorem \ref{equal} generalizes
MacMahon's equidistribution result.

We shall prove a lemma about the major index for
words and then use it to prove Theorem~\ref{main}.

\begin{lemma} \label{perm}
 Let $w=w_1\dots w_k$ be a word such that $w_i < n$ for all $i$. Consider the set $S(w)$ of
 all the words $w'$ that can be obtained by inserting $m$ many $n$'s between the
 letters of $w$. Then the difference $\m(w')-\m(w)$ ranges over the
 multiset $\{i_1+i_2+\cdots +i_m\  |\  0 \leq i_1 \leq i_2 \leq \cdots \leq i_m \leq
 k\}$. It follows that
\begin{equation}\label{lemma}
\sum_{w' \in S(w)}q^{\m(w')}=q^{\m(w)}\sum_{0 \leq i_1 \leq i_2
\leq \cdots \leq i_m \leq k}q^{i_1+i_2+\cdots +i_m}=q^{\m(w)}
\genfrac[]{0pt}{}{k+m}{m}_q.\end{equation} The same statement
holds if $w=w_1\dots w_k$ is a word such that $w_i >1$ for all $i$
and $S(w)$ is the set of all the words $w'$ obtained by inserting
$m$ many 1's between the letters of $w$.
\end{lemma}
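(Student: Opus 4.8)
The plan is to separate the statement into two ingredients: a single-insertion lemma (the case $m=1$), and an induction on $m$ that assembles single insertions into the full multiset. Throughout I will lean on the identity $\m(w)=\sum_{i=1}^{k}\de(w_i w_{i+1}\cdots w_k)$ recorded in Section~\ref{examplerec}, which converts $\m$ into a sum of suffix descent counts and makes the effect of inserting a large letter transparent. I will also use the standard $q$-series identity $\genfrac[]{0pt}{}{k+m}{m}_q=\sum_{0\le i_1\le\cdots\le i_m\le k}q^{i_1+\cdots+i_m}$ (partitions in a $k\times m$ box), so that the displayed generating function and the multiset description of the differences are equivalent; it therefore suffices to prove the multiset statement, noting that the elements of $S(w)$ are in bijection with the $\genfrac[]{0pt}{}{k+m}{m}{}$ ways of distributing the $m$ copies of $n$ among the $k+1$ gaps of $w$.

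First I would settle $m=1$. Writing $w'$ for the word obtained by inserting a single $n$ just after $w_g$ (with $g=0$ meaning the front and $g=k$ the end), I would compute $\m(w')-\m(w)$ directly. Since $n$ exceeds every letter of $w$, the boundary $w_g\,n$ is never a descent, the inserted $n$ creates a descent exactly when it is not at the very end, and every remaining descent simply shifts one position to the right; this yields a closed formula for the increment in terms of $g$ and the descent set $D=\{j:w_j>w_{j+1}\}$. Inserting at the end gives increment $0$. Splitting the other gaps by whether $g\in D$, a short count shows that the gaps $g\in D$ realize precisely the increments $\{1,\dots,|D|\}$ while the non-descent gaps realize precisely $\{|D|+1,\dots,k\}$. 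Hence the $k+1$ increments are a permutation of $\{0,1,\dots,k\}$, which is the case $m=1$.

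For general $m$ I would induct on $m$, using a stable form of the single-insertion lemma: if $v$ is any word whose non-$n$ letters are $w_1,\dots,w_k$ in order (and which may already contain several $n$'s), then inserting one further $n$ into the $k+1$ gaps determined by the small letters again produces increments that form a permutation of $\{0,1,\dots,k\}$. This holds by the same descent bookkeeping, because equal copies of $n$ never form descents among themselves, so only the $k$ small letters govern the count. I would then organize the induction to match the $q$-Pascal recursion $\genfrac[]{0pt}{}{k+m}{m}_q=\genfrac[]{0pt}{}{k+m-1}{m}_q+q^{k}\genfrac[]{0pt}{}{k+m-1}{m-1}_q$, grouping the words of $S(w)$ by how many copies of $n$ occupy a fixed extreme gap (copies at the very end contribute no descent and shift no position); equivalently, one builds $w'$ by inserting the $n$'s one at a time in a fixed order and records the successive increments as a non-decreasing sequence $i_1\le\cdots\le i_m$ in $\{0,\dots,k\}$ whose sum is $\m(w')-\m(w)$.

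The main obstacle will be the bookkeeping in this last step. Because the $m$ copies of $n$ are identical, inserting them one at a time over-counts (many insertion orders yield the same word), and the gap-to-increment bijection furnished by the single-insertion lemma changes after each insertion. The care required is to fix a canonical insertion order (or a canonical peeling that strips off one $n$ at a time) so that distinct non-decreasing sequences correspond to distinct fillings and the recursion closes to $\genfrac[]{0pt}{}{k+m}{m}_q$; checking that the recorded increments form exactly the multiset $\{i_1+\cdots+i_m\}$ is where the argument must be made watertight. Finally, the symmetric statement for inserting $m$ copies of $1$ into a word with all letters $>1$ follows by the order-reversed version of the same computation: now $1$ lies below every letter, so the inserted minimum creates a descent immediately to its \emph{left} rather than to its right, and the identical descent-shifting analysis again yields the increment multiset $\{i_1+\cdots+i_m:0\le i_1\le\cdots\le i_m\le k\}$.
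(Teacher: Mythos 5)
Your single-insertion analysis is correct and is essentially the paper's first step: the increments $\m(w^{(i)})-\m(w)$ over the $k+1$ gaps form a permutation of $\{0,1,\dots,k\}$ (the paper packages this as the statement that the descent sequence $\ds(w)$ is a shuffle of $t+1,t,\dots,0$ and $t+2,\dots,k$ with $t=\de(w)$). The genuine gap is the passage to general $m$, which is the heart of the lemma and which you explicitly leave open; the plan you sketch does not go through as stated. First, the successive increments obtained by inserting the $n$'s one at a time need not be non-decreasing, and worse, even their sorted sequence depends on the insertion order: for $w=12$ and $n=3$, the word $3132$ is reached as $12\to 312\to 3132$ with increments $(1,3)$ but as $12\to 132\to 3132$ with increments $(2,2)$, so there is no well-defined ``recorded non-decreasing sequence'' without further choices, and the injectivity onto $\{(i_1,\dots,i_m): 0\le i_1\le\cdots\le i_m\le k\}$ is precisely what remains to be proved. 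Second, the increment caused by placing $b$ copies of $n$ in a single gap is \emph{not} $b$ times that gap's single-insertion increment (for $w=12$, two $3$'s in the middle gap give $1332$ with increment $3$, not $2\cdot 2=4$), so the multiset cannot be read off gap by gap. Third, the $q$-Pascal grouping by occupancy of a fixed extreme gap does not produce a sub-collection of $S(w)$ that is an instance of the same problem with smaller parameters, since the gap carrying a given increment value moves around as the descent structure changes.

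The paper avoids all of this by treating all $m$ insertions simultaneously in the special case $\text{Des}(w)=[k-1]$ (strictly decreasing $w$), where additivity does hold: placing $b_i$ copies in gap $i$ increases $\m$ by exactly $b_i(k-i+1)$, so the multiset statement is immediate. It then transports the result to arbitrary $w$ by showing that every descent sequence arises from $k(k-1)\cdots 0$ by adjacent transpositions, and for each transposition it constructs an explicit bijection $(b_j)\mapsto (c_j)$ on gap-multiplicity vectors satisfying $\m(w')-\m(w)=\m(v')-\m(v)$. To complete your argument you would need either to carry out such a reduction, or to abandon the insertion-by-insertion induction in favor of the $P$-partition argument indicated in Remark 4.6; as written, the inductive step is missing, not merely unpolished.
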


\begin{proof} 
We give an elementary proof for the case of inserting $n$'s.
The case of inserting 1's is dealt with similarly. For a word
$w=w_1 \dots w_k$ we define the \textit{descent sequence}
$\ds(w)=a_1 \dots a_{k+1}$ by letting $a_i=\m(w^{(i)})-\m(w)$, where
$w^{(i)}$ is obtained by inserting one $n$ in the $i$-th gap of
$w$, i.e., $w^{(i)}=w_1 \dots w_{i-1} n w_{i} \dots w_k$.  Note
that
\[\m(w^{(i)})=\m(w)+\begin{cases} 1+ \de(w_i \dots w_k), &\text{ if }
w_{i-1} >w_i \text{ or } i=1\\
i+\de(w_i \dots w_k), &\text{ if } w_{i-1} \leq w_i\\
0, &\text{ if } i=k+1.
\end{cases}
\]
Hence, if $t=\de(w)$ then $\ds(w)$ is a shuffle of the sequences
$t+1,t,\dots, 0$ and $t+2, t+3, \dots, k$ which begins with $t+1$
and ends with $0$. Moreover, every such sequence is $\ds(w)$ for
some word $w$. It is not hard to see that the multiset
$\{\m(w')-\m(w) \ |\  w' \in S(w)\}$ depends only on $\ds(w)$ and not
on the letters of $w$.

First we consider the case when $\ds(w)=k (k-1) \dots 0$, i.e.,
$\text{Des}(w)=[k-1]$. Inserting $b$ $n$'s between $w_{i-1}$ and
$w_i$ increases the major index by $b(k-i+1)$. Hence, if one
inserts $b_i$ many $n$'s between $w_{i-1}$ and $w_i$ for $1 \leq i
\leq k+1$,  the major index is increased by
\[\sum_{i=1}^{k+1} b_i(k-i+1)=\underbrace{k+\cdots +k}_{b_1}+\underbrace{(k-1)+\cdots+(k-1)}_{b_2}+\cdots+\underbrace{0+\cdots+0}_{b_{k+1}}.\]
Letting $(b_1,\dots, b_{k+1})$ range over all $(k+1)$-tuples with
$\sum_{i=1}^{k+1}b_i=m$, the increased amount ranges over the
multiset $\{i_1+i_2+\cdots +i_m\  |\ 0 \leq i_1 \leq i_2 \leq \cdots
\leq i_m \leq k\}$.

To deal with the other possible  descent sequences, we note that
any descent sequence  $\ds(w)$ can be obtained by applying a
series of adjacent transpositions to $k (k-1) \dots 0$, where we
only transpose adjacent  elements $ i, j$ which correspond to ascent and descent positions, respectively. 
Suppose
$\ds(w)=a_1 \dots a_{i-1} a_{i+1} a_{i} a_{i+2} \dots a_{k+1}$ is
obtained by transposing the $i$-th and $(i+1)$-st element in
$\ds(v)=a_1 \dots a_{i-1} a_{i} a_{i+1} a_{i+2} \dots a_{k+1}$,
where $a_i$ and $a_{i+1}$ correspond to an ascent and a descent in $v$, respectively. 
Consequently, $w_{i-1} > w_i \leq w_{i+1}$ and  
$v_{i-1} \leq v_i > v_{i+1}$, while $w_j \leq w_{j+1}$ if and only if
$v_j \leq v_{j+1}$ for $j \neq i-1, i$. 
It suffices to show that the multisets
$\{\m(w')-\m(w) \ |\  w' \in S(w)\}$ and $\{\m(v')-\m(v)\  |\  v' \in
S(v)\}$ are equal. Suppose $w' \in S(w)$ is obtained by inserting
$b_j$ many $n$'s in the $j$-th gap of $w$, $1 \leq j \leq k+1$.
Let $v' \in S(v)$ be the word obtained by inserting $c_j$ $n$'s in
the $j$-th gap of $v$ where $c_j$ are defined as follows.\\
If $b_{i+1}=0$ then
\begin{equation} 
c_j=\begin{cases} 0, &\text{ if } j=i\\
b_i , &\text{ if } j=i+1 \\
b_j, &\text { if } j \neq i, i+1.
\end{cases}
\end{equation}
If $b_{i+1}>0$ then
\begin{equation}
c_j=\begin{cases} b_i+1, &\text{ if } j=i\\
b_{i+1}-1 , &\text{ if } j=i+1 \\
b_j, &\text { if } j \neq i, i+1.
\end{cases}
\end{equation}
This defines a map between sequences $(b_j)$ with $b_{i+1}=0$
(resp. $b_{i+1}>0$), and $(c_j)$ with $c_i=0$ (resp. $c_i>0$),
which is a bijection. The $n$'s inserted in the $j$-th gap of $w$ for $j \neq i, i+1$ contribute to $\m(w')-\m(w)$ the same amount as do the $n$'s inserted in the same gaps in $v$ to $\m(v')-\m(v)$. The major index contributed to $\m(w')$  by the segment 
\[
 w_{i-1}\underbrace{n\dots n}_{b_i} w_i \underbrace{n \dots n}_{b_{i+1}} 
\]
is the sum of $i-1$ (which equals the contribution of $w_{i-1}w_i$ to $\text{maj}(w)$) and 
\begin{eqnarray*}
\begin{cases} (b_1+ \cdots + b_i)+ (b_1 + \cdots +b_{i+1}+i), &\text{if $b_{i+1} > 0$}\\
(b_1+ \cdots + b_i), &\text{if $b_{i+1} = 0$}.
\end{cases}
\end{eqnarray*}
Similarly, the major index contributed to $\m(v')$ by the segment 
\[
v_{i-1}\underbrace{n\dots n}_{c_i} v_i \underbrace{n \dots n}_{c_{i+1}}
\]
is the sum of $i$ (which equals the contribution of $v_{i-1}v_i$ to $\m(v)$) and
\begin{eqnarray*}
\begin{cases} 
(c_1+ \cdots + c_i +i-1)+ (c_1 + \cdots +c_{i+1}), &\text{if $c_i > 0$}\\
(c_1+ \cdots + c_{i+1}), &\text{if $c_i = 0$}.
\end{cases}
\end{eqnarray*}
Now, one readily checks that 
$\m(w')-\m(w)=\m(v')-\m(v)$. This completes the proof.

\end{proof}

\begin{remark}\label{gessel}
Lemma \ref{perm} can be proved in many ways, for example, in \cite[Theorem 1.2]{foata-han} Foata and Han constructed a transformation on the shuffle class Sh$(0^m v)$, 
where $v$ is a nonempty word whose letters are positive integers. 
Alternatively, one can use the theory of 
$P$-partitions \cite{gessel}, 
which leads to a stronger result, namely, 
Equation~\eqref{lemma}  holds if $w=w_1\dots w_k$ with $w_i \neq
n_0$, $i=1,\dots, k$ and $S(w)$ is the set of all words $w'$
obtained by inserting $m$ copies of $n_0$ between the letters of
$w$. To show this, it is enough to consider the case when $w$ is a
permutation of length $k$ on $k$ distinct numbers between 1 and
$k+m$ such that the integers $N_1< N_2< \dots< N_{m}$ are missing
from it and $w'$ varies over the set $S(w)$ of all permutations
obtained by inserting $N_1, N_2, \dots, N_m$ in $w$ from left to
right. We outline the proof in this general case assuming that the
reader is familiar with the theory of $P$-partitions. We adopt the
notation and definitions from Section 4.5 in~\cite{stanley}, which
is also a good reference for basic results on $P$-partitions.

Let $P_1$ be the antichain with $k$ elements labeled with the
letters of $w$. Let $P_2$ be a disjoint union of $P_1$ and a chain
of $m$ elements labeled $N_1 < N_2 < \cdots < N_m$. We denote by
$F_w(q_1,\dots, q_k)$ the generating function of all
$w$-compatible functions $f: [k] \rightarrow \mathbb{N}$ (see~\cite{stanley}
for the definitions). Lemma 4.5.2 in~\cite{stanley} asserts that 
\begin{equation} \label{A}
 F_w (q, \dots, q)= \frac{q^{\m(w)}}{(1-q) \cdots (1-q^k)}.
\end{equation}
All $P_2$-partitions compatible with some $w' \in S(w)$ are
obtained by combining a $P_1$-partition compatible with $w$ and a
integer partition with at most $m$ parts. Therefore,
\begin{equation} \label{B}
 \sum_{w' \in S(w)} F_{w'}(q,\dots,q) = \frac{F_w (q, \dots, q)}{(1-q) \cdots(1-q^m)}.
\end{equation}
Applying Lemma 4.5.2 in~\cite{stanley} to all $w' \in S(w)$, we
have
\begin{equation} \label{C}
 \sum_{w' \in S(w)} F_{w'}(q,\dots,q) = \sum_{w' \in S(w)} \frac{q^{\m(w')}}{(1-q) \cdots (1-q^{k+m})}.
\end{equation}
Combining equations~\eqref{A}, \eqref{B}, and \eqref{C} gives~\eqref{lemma}.

\end{remark}

\begin{proof}[\textbf{Proof of Theorem~\ref{main}}]

Let $c$ be the smallest  column of $\M$ in the order $\prec$ and
$\M'=\M \backslash c$. Note that $c$ is the leftmost or the
rightmost column of $\M$. Let $M$ be a filling of $\M$ with $s$
nonempty cells in $c$ and let $M'$ be its restriction on $\M'$.
From Definition 2.2', we derive that 
\[\m(M)=\m(M')+\m(R_c) - \m(R_c \cap M'),\] where $R_c$ is the
filling of the rectangle $\M(c)$ determined by the 
column $c$.
$R_c$ is obtained by adding a column with $s$ many $1$-cells
to the rectangular filling $R_c \cap M'$. Using the bijection
between fillings of rectangles and words described in
Section~\ref{examplerec}, one sees that if $c$ is the leftmost
(resp. rightmost) column in $M$, this corresponds to inserting
$s$ maximal (resp. minimal) elements in the word determined by
the filling $R_c \cap M'$. When the column $c$ varies over all
possible $\binom{h}{s}$ fillings, by Lemma~\ref{lemma}, the value
$\m(R_c) - \m(R_c \cap M')$ varies over the multiset
$\{i_1+i_2+\cdots +i_s\  | \ 0 \leq i_1 \leq i_2 \leq \cdots \leq i_s
\leq (h-s)\}$ with generating function $\sum_{0 \leq i_1 \leq
\cdots \leq i_s \leq (h-s)} q^{i_1+\cdots +i_s}=
\genfrac[]{0pt}{}{h}{s}_q$, where $h$ is the value $h_1$ (resp.
$h_m$) if $c$ is the leftmost (resp. the rightmost) column of
$M$, as defined in~\eqref{hi}. Therefore,
\[\sum_{M \in \n(\M,\s;A)} q^{\m(M)}=\genfrac[]{0pt}{}{h}{s}_q \sum_{M' \in \n(\M', \s';A)}
q^{\m(M')}\] where $\s'$ is obtained from $\s$ by removing the first (resp.
last) component if $c$ is the leftmost (resp. rightmost) column
of $M$. Equation~\eqref{maineq} now follows by induction on the
number of columns of $\M$.
\end{proof}

\section{The Foata-type bijection for moon polyominoes}\label{S:foata}

The objective of this section is to give a bijective  proof to Theorem 4.5. 
For the set $W_S$ of all words of a multiset $S$,
the equidistribution of maj and inv  was first proved by 
MacMahon by combinatorial analysis. 
This raised the question of 
constructing a canonical bijection $\Phi: W_S \rightarrow W_S$ such that 
$\text{maj}(w)= \text{inv}(\Phi(w))$. Foata answered the question by 
constructing an elegant map $\Phi$ \cite{foata}, which is referred to as the 
\emph{second fundamental transformation} \cite{foata97}.  
We begin this section by reviewing Foata's map $\Phi: W_S \rightarrow W_S$.

 Let $w=w_1w_2\cdots w_n$ be a word on $\mathbb{N}$ and let $a$ be an integer. 
 If $w_n \leq a$, the $a$-factorization of $w$ is
$w=v_1b_1\cdots v_pb_p$, where each $b_i$ is a letter less than or equal to $a$,
and each $v_i$ is a word (possibly empty), all of whose letters are
greater than $a$. Similarly, if $w_n > a$, the $a$-factorization of
$w$ is $w=v_1b_1\cdots v_pb_p$, where each $b_i$ is a letter greater
than  $a$, and each  $v_i$ is a word (possibly empty), all of whose
letters are less than or equal to $a$. In each case one defines
$$
\gamma_a(w)=b_1v_1\cdots b_pv_p.
$$
With the above notation, let $a=w_n$ and let $w'=w_1\cdots w_{n-1}$.
The second fundamental transformation $\Phi$ is defined recursively by
$\Phi(w)=w$ if $w$ has length $1$, and
$$
\Phi(w)=\gamma_a(\Phi(w'))a,
$$
if $w$ has length $n >1$. The map $\Phi$ has the property that
it preserves the last letter of the word, and 
$\text{inv} (\Phi(w)) =\text{maj}(w)$. 

Foata's map $\Phi$  is constructed recursively with certain ``local operations'' 
to eliminate the difference caused by adding the last letter in the words. Inspired 
by this idea,  we construct a Foata-type bijection $\phi:
\n(\M,\s;A) \rightarrow \n(\M,\s;A)$ with the property
$\m(M)=\se(\phi(M))$. The map $\phi$ can be defined directly for
left-aligned stack polyominoes. But we describe it first for Ferrers
diagrams in Section 5.1, because this case contains all the
essential steps and is easy to understand. This map is a revision of
the Foata-type bijection presented in the preprint of~\cite{CGYY}
for set partitions, which correspond to fillings with row and
column sums at most 1. Then in Section 5.2 we extend the construction to 
 left-aligned stack polyominoes and prove that
$\m(M)=\se(\phi(M))$. In Section 5.3, we construct two bijections,
$f$ and $g$, that transform a filling of a moon polyomino to a
filling of a left-aligned stack polyomino and preserve the statistics maj and
ne, respectively. Composing these maps  with $\phi$
defined on left-aligned stack polyominoes yields a bijection on
$\n(\M, \s; A)$ that sends maj to ne.

\subsection{The bijection $\phi$ for Ferrers diagrams}
\label{ferrers}

The empty rows in the filling $M \in \n(\M,\s;A)$ do not play any role in the definitions of $\m(M)$ or $\se(M)$. 
Therefore, in what follows we assume that $A=\emptyset$ and describe $\phi$ for fillings without empty rows. For $A \neq \emptyset$, one can first delete the empty rows of the filling, apply the map $\phi$, and reinsert the empty rows back.

Let $\F$ be a Ferrers diagram and $F$ a filling of $\F$. The
bijection $\phi$ is defined inductively on the number of rows of
$F$, which preserves the first row of $F$. 
If $F$ has only one row, then $\phi(F)=F$. Otherwise, 
we denote by $F_1$ the filling obtained by deleting the top
row $r$ of $F$. Let $F_1'=\phi(F_1)$ and let $F_2$ be the
filling obtained by performing the algorithm $\gamma_r$ described
below. Then $F'=\phi(F)$ is obtained
from $F_2$ by adding the top row $r$.

\subsection*{\textbf{\underline{The algorithm $\gamma_r$}}}

If $C$ is the $1$-cell of $r$, then denote by $\R$ the set of
all rows of $F_1'$ that intersect the column of $C$. The 
$1$-cells in $\R$ that are strictly to the left of $C$ are called left
and the $1$-cells in $\R$ that are weakly to the right of
$C$ are called right. The cell $C$ is neither left nor right.

Let $CLC$ (critical left cell) be the topmost left $1$-cell, and $CRC$
(critical right cell) be the leftmost right $1$-cell that is above
$CLC$. If there is more than one such cell the $CRC$ is defined to be the lowest one. Denote by $\R_1$ the set of all rows weakly below
the row of $CRC$ that intersect the column of $CRC$ and $\R_2$ is
the set of all rows in $\R$ that do not intersect the column of
$CRC$. If $CLC$ does not exist then both $\R_1$ and $\R_2$ are empty. If $CLC$ exists but $CRC$ does not, then $\R_1$ is empty and $\R_2$ contains all the rows in $\R$ weakly below $CLC$.

\begin{definition} \label{expressions}
Let  $\cone$ and $\ctwo$ be two $1$-cells with
coordinates $(i_1,j_1)$ and $(i_2,j_2)$, respectively. We \textit{swap the cells $\cone$ and
$\ctwo$} by deleting the 1's from these two cells and write 1's
in the cells with coordinates $(i_1,j_2)$ and $(i_2,j_1)$.
\end{definition}

For a cell $C$, denote by $col(C)$  the column of $C$, and $|col(C)|$  the length 
of $col(C)$.

\subsection*{\textbf{Algorithm $\gamma_r^1$ on $\R_1$}} 
\textsf{Let $\pone$ and $\ptwo$ be two pointers.}
\begin{enumerate}
\item [\textsf{\textbf{(A)}}] \textsf{Set $\pone$ on the highest row of $\R_1$ and
$\ptwo$ on the next row in $\R_1$ below $\pone$.}
\item [\textsf{\textbf{(B)}}] \textsf{If $\ptwo$ is null, then go to (D).
Otherwise, the pointers $\pone$ and $\ptwo$ point at $1$-cells $\cone$ and $\ctwo$, respectively.}
\textsf{\begin{enumerate}
\item [\textbf{(B1)}] If $\ctwo$ is a left cell then swap the
cells $\cone$ and $\ctwo$  and move $\pone$ to the row of $\ptwo$.
\item [\textbf{(B2)}] If $\ctwo$ is a right cell, then
\begin{enumerate}
\item [\textbf{(B2.1)}] If $|col(\cone)|=|col(\ctwo)|$ then
move $\pone$ to the row of $\ptwo$.
\item [\textbf{(B2.2)}] If $\cone$ is to the left of $\ctwo$ and
$|col(\cone)| > |col(\ctwo)|$ then do nothing.
\item [\textbf{(B2.3)}] If $\cone$ is to the right of $\ctwo$ and
$|col(\cone)| < |col(\ctwo)$,
 then find the lowest left $1$-cell $L$ that is above $\cone$.
Suppose that the row-column coordinates of the cells $L$, $\cone$,
and $\ctwo$ are $(i_1,j_1)$, $(i_2,j_2)$, and $(i_3,j_3)$,
respectively. Delete the 1's from these three cells and write them
in the cells with coordinates $(i_1,j_3)$, $(i_2,j_1)$, and
$(i_3,j_2)$. Move $\pone$ to the row of $\ptwo$.$^\dagger$
\end{enumerate}
\end{enumerate}}
\item [\textsf{\textbf{(C)}}] \textsf{Move $\ptwo$ to the next  row in $\R_1$.
Go to (B).
\item [\textbf{(D)}] Stop.}
\end{enumerate} 
See Figure~\ref{alg1} for illustration of the steps.

\textbf{$^\dagger$Note:}
This step is well-defined since it cannot occur before $\ptwo$ reaches $CLC$ (that would contradict the definition of $CRC$) and, after that, $\pone$ is always below $CLC$. Therefore, the $1$-cell $L$ always exists. The fact that the square $(i_3, j_2)$ belongs to $\mathcal{F}$ follows from the definition of $\R_1$ and Lemma 5.2 (c).

\begin{figure}[ht]
\begin{center}
\includegraphics[width=15cm]{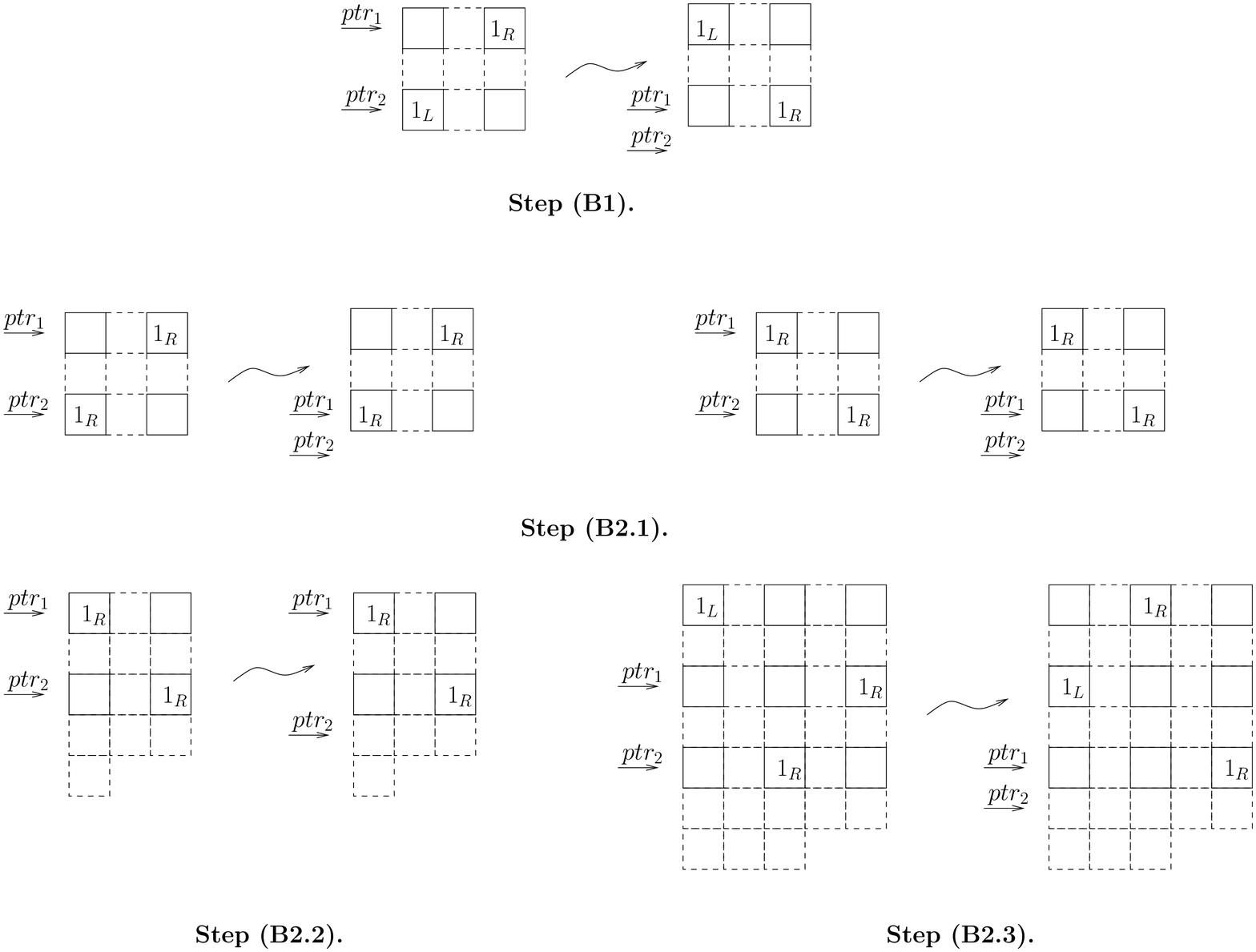}
\end{center}
\caption{The algorithm $\gamma_r^1$ on $\R_1$.}
\label{alg1}
\end{figure}

When the algorithm $\gamma_r^1$ stops, we continue processing the rows of $\R_2$ by algorithm $\gamma_r^2$.

\subsection* {\textbf{The algorithm $\gamma_r^2$ on $\R_2$}}

\begin{enumerate}
\item [\textsf{\textbf{(A$'$)}}]\textsf{ Set $\pone$ on the highest row of $\R_2$
and $\ptwo$ on the next row in $\R_2$ below $\pone$.}
\item [\textsf{\textbf{(B$'$)}}] (\textit{\textbf{Borrowing}})\textsf{ If $\pone$
points to a right $1$-cell then find the lowest left $1$-cell above it and
swap them. Now $\pone$ points to a left $1$-cell.}
\item [\textsf{\textbf{(C$'$)}}]  \textsf{If $\ptwo$ is null, then go to (E$'$).
Otherwise, the pointers $\pone$ and $\ptwo$ point at 
$1$-cells $\cone$ and $\ctwo$ respectively.}
\textsf{
\begin{enumerate}
\item [\textbf{(C$'$1)}] If $\ctwo$ is a right cell then swap
$\cone$ and $\ctwo$.
\item [\textbf{(C$'$2)}] If $\ctwo$ is a left cell then do
nothing.
\end{enumerate}}
\item [\textsf{\textbf{(D$'$)}}] \textsf{Move $\pone$ to the row of $\ptwo$ and
$\ptwo$ to the next row in $\R_2$ below. Go to (C$'$).
\item [\textbf{(E$'$)}] Stop. }
\end{enumerate}
See Figure~\ref{alg2} for illustration of the steps in $\gamma_r^2$.

\begin{figure}[ht]
\begin{center}
\includegraphics[width=15cm]{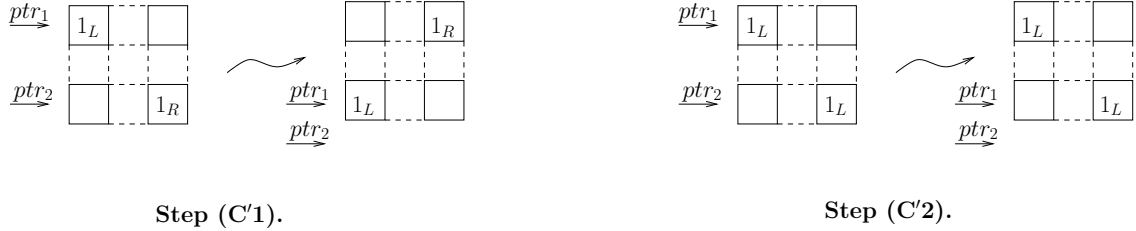}
\end{center}
\caption{Algorithm $\gamma_r^2$ on $\R_2$.}
\label{alg2}
\end{figure}

Let us note the following easy but useful properties of the algorithm $\gamma_r$.
\begin{enumerate}
\item The pointers $\pone$ and $\ptwo$ process the rows of $\R_1$
and $\R_2$ from top to bottom. However, while $\ptwo$ always moves
from one row to the next one below it, $\pone$ sometimes stays on
the same row (cf. step (B2.2)) and sometimes ``jumps'' several rows below (cf. step (B1)).

\item Pointer $\pone$ always points to a right $1$-cell in $\gamma_r^1$
and to a left $1$-cell in $\gamma_r^2$.

\end{enumerate}

Further we  show some nontrivial properties which will be used to show that the map $\phi$ has the desired property $\m(F)=\se(\phi(F))$.

\begin{lemma} \label{prop} 
Suppose in $\gamma_r^1$ $\pone$ is pointing to $\cone$ and $\ptwo$ is pointing to $\ctwo$. Then
\begin{itemize}
\item[(a)]  In  the rows between $\cone$ and $\ctwo$ there are no 1's weakly to the left of $\cone$.

\item[(b)]  If $L$ is the lowest left $1$-cell above $\cone$, then in the rows between $L$ and $\cone$ there are no 1's weakly to the left of $\cone$.

\item[(c)]  The $1$-cell $\cone$ is in a column with same length as the column of the critical right cell $CRC$. Moreover, when $\gamma_r^1$ stops $\pone$ points to the last row in $\R_1$.

\end{itemize}
\end{lemma}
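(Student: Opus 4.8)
My plan is to establish (a), (b), and the first assertion of (c) together as loop invariants, by induction on the number of passes through step (B) of $\gamma_r^1$, and then to read off the second assertion of (c) at termination. Write $\ell=|col(CRC)|$. I first record two structural facts used throughout: since $A=\emptyset$ every row carries a $1$, and since $\F$ is a Ferrers diagram the set $\R_1$ consists of the consecutive rows from that of $CRC$ down to the bottom row of the column of $CRC$. In the base case $\pone$ sits on the top row of $\R_1$, so $\cone=CRC$ and $|col(\cone)|=\ell$; because $\R_1$ is consecutive, $\cone$ and $\ctwo$ lie in adjacent rows and (a) is vacuous, and because every $1$-cell of $F_1'$ above $CLC$ is a right cell while $CRC$ lies above $CLC$, there is no left cell above $\cone$ and (b) is vacuous.

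The first assertion of (c) is the easiest to propagate. In (B1) and (B2.3) the relevant right $1$-cell keeps its own column and is merely slid down to the row of $\ptwo$; in (B2.2) the cell $\cone$ is untouched; and in (B2.1) $\pone$ moves onto $\ctwo$, whose column has length $\ell$ by the branch hypothesis together with the induction hypothesis. Hence in every branch the new $\cone$ again lies in a column of length $\ell$. The key observation for (a) is that $\pone$ advances to the row of $\ptwo$ in every branch except (B2.2); consequently the rows strictly between $\cone$ and $\ctwo$ are exactly those that $\ptwo$ passed over while $\pone$ stood still, i.e. rows whose $1$-cell was treated in (B2.2) and so lies in a column strictly shorter than $\ell$, hence strictly to the right of $\cone$. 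This yields (a) at once, the only nonvacuous verification occurring right after a (B2.2) step.

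The heart of the argument, and the step I expect to be the main obstacle, is propagating (b). While $\pone$ stays weakly above $CLC$ there is no left cell above $\cone$ and (b) is vacuous; the first descent past a left cell happens in branch (B1), where the swap deposits a left cell in the former row of $\pone$, directly above the new $\cone$, with only strictly-right cells in between by (a). Thereafter I would argue branch by branch that (b) persists: in (B1) and (B2.3) a fresh left cell is again planted immediately above the new $\cone$, and the rows between it and the new $\cone$ carry only strictly-right cells by (a); in (B2.2) neither $\cone$, its lowest left cell $L$, nor the region between them changes; and in (B2.1), although no new left cell is created, the lowest left cell $L$ above $\cone$ is unchanged and the new $\cone$ still has column length $\ell$, so the region between $L$ and the new $\cone$ is the old controlled region enlarged only by the former position of $\cone$ (a column of length $\ell$) and by the strictly-right cells passed in between. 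Checking (B2.1) carefully—keeping track of which cells remain stationary and verifying they cannot lie to the left of the relocated $\cone$—is the delicate point, and it is where the first assertion of (c) is genuinely needed.

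Finally, the second assertion of (c) drops out at termination. The loop halts only when $\ptwo$ becomes null, that is, immediately after $\ptwo$ has processed the bottom row of $\R_1$; I would first note that no swap can disturb that row beforehand, since every swap involves only rows at or above $\ptwo$ together with a left cell above $\pone$. The $1$-cell in the bottom row lies in a column reaching down to the bottom of the column of $CRC$, hence in a column of length at least $\ell=|col(\cone)|$, so branch (B2.2)—which requires $|col(\cone)|>|col(\ctwo)|$—cannot fire there; one of (B1), (B2.1), (B2.3) fires instead and moves $\pone$ onto the bottom row. As this is the only way $\pone$ can reach it, $\pone$ points to the last row of $\R_1$ when $\gamma_r^1$ stops.
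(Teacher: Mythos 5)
Your treatment of (a), of the first assertion of (c), and of the termination claim in (c) is sound and essentially the paper's own argument: (a) rests on the observation that the only branch leaving a gap between $\pone$ and $\ptwo$ is (B2.2), whose cell sits in a column shorter than $\ell$ and hence strictly to the right of $\cone$; and the terminal claim in (c) rests on the fact that the bottom row of $\R_1$ reaches the bottom of the column of $CRC$, so its $1$-cell lies in a column of length at least $\ell$ and (B2.2) cannot be the branch executed there (the paper phrases this contradiction via $\R_2$, but it is the same point).

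The genuine gap is in (b), at exactly the step you flag as ``the delicate point'' and then do not carry out. For branch (B2.1) you assert that what remains is to verify that the stationary cells ``cannot lie to the left of the relocated $\cone$,'' and that the first assertion of (c) is what makes this work. It does not: (c) only tells you that the old $\cone$ and the new $\cone=\ctwo$ both sit in columns of length $\ell$, and in a Ferrers diagram the columns of length $\ell$ form a contiguous block inside which the two cells may occur in either horizontal order, or even in the same column. When the old $\cone$ is weakly to the left of $\ctwo$ --- that is, when $\cone$ and $\ctwo$ do not form an NE chain --- the row of the old $\cone$ is a row strictly between $L$ and the new $\cone$ containing a $1$ that is weakly to the left of the new $\cone$, so your claimed verification fails and no appeal to (c) repairs it. The paper's proof splits (B2.1) into precisely these two subcases (NE chain or not); in the first it combines (a) with the induction hypothesis, and in the second it additionally uses that the rows strictly between $\cone$ and $\ctwo$ carry only cells in columns shorter than $\ell$, hence strictly to the right of every column of length $\ell$ --- a fact that does follow from your own observation that those intermediate rows were all processed by (B2.2), but which you never state, and which still leaves the row of the old $\cone$ itself to be accounted for explicitly. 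Until that subcase is written out, your induction for (b) is a plan rather than a proof, and the one case you defer is the only one that is not routine.
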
 

\begin{proof} The first two parts are proved by induction on the number of steps performed in the algorithm.
\begin{itemize}
\item [(a)] The only step which leaves the gap between $\pone$ and $\ptwo$ nonempty is (B2.2). But  after this step, the number of 1's in the rows between $\pone$ and $\ptwo$ weakly to the left of $\cone$ does not change.

\item[(b)] The case when the last step is (B2.2) follows from the induction hypothesis. If the last step is (B1) or (B2.3) the claim follows from part (a). The case when the last step was (B2.1) and the cells $\cone$ and $\ctwo$ formed an NE chain the claim also follows from part (a) and the induction hypothesis. If in (B2.1), the cells $\cone$ and $\ctwo$ do not form an NE chain,  in addition to the induction hypothesis, one uses the fact that there are no 1's in the rows between these two cells which are weakly to the left of $\ctwo$.

\item[(c)] The first part follows immediately from the definition of the steps. If the second part is not true, then the last step must be (B2.2).  and the column of $\cone$ is longer than the column of $\ctwo$. Since this is the last step of $\gamma_r^1$, it follows that the column of $\cone$ intersects the top row of $\R_2$. However, the column of $CRC$ does not intersect the top row of $\R_2$ by definition of $\R_2$. This contradicts the fact that $\cone$ and $CRC$ are in columns of same length.
\end{itemize}
\end{proof}

\begin{proposition} \label{alg} After algorithm $\gamma_r^1$ the number of NE chains decreases by the total number of  left $1$-cells in $\R_1$, whereas after algorithm $\gamma_r^2$ the number of NE chains increases by the total number of  right $1$-cells in $\R_2$.
\end{proposition}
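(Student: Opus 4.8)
The plan is to regard each of the two sub-algorithms as a finite sequence of elementary cell moves (the swaps in (B1), (B2.3), (C$'$1) and the borrowing in (B$'$)), to determine the net change of $\se$ caused by one such move, and then to add up these contributions. The counting of the moves is immediate from the fact that $\ptwo$ visits every row of $\R_1$ (resp.\ $\R_2$) below the top one exactly once. In $\gamma_r^1$ the top row of $\R_1$ carries $CRC$, which is a right cell, so every left $1$-cell of $\R_1$ is eventually met by $\ptwo$ and triggers exactly one (B1)-swap; hence the number of (B1)-swaps equals the number of left cells of $\R_1$. In $\gamma_r^2$ every right cell of $\R_2$ not on the top row is met by $\ptwo$ and triggers one (C$'$1)-swap, while a right cell on the top row is disposed of by the single borrowing step (B$'$); so the number of (C$'$1)-swaps plus borrowings equals the number of right cells of $\R_2$. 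Using Lemma~\ref{prop} one checks that the relocations performed by (B2.3) alter the left/right type only of rows already passed by $\ptwo$, so they do not disturb this count. It therefore suffices to prove that a (B1)-swap changes $\se$ by $-1$, that a (C$'$1)-swap and a borrowing each change $\se$ by $+1$, and that (B2.3) and all the no-op steps leave $\se$ unchanged.

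To evaluate a single two-cell swap of $\cone=(i_1,j_1)$ and $\ctwo=(i_2,j_2)$ with $i_1<i_2$, I would classify every other $1$-cell $D=(i_3,j_3)$ according to whether it lies above both rows, below both rows, or strictly between them. A direct check shows that for any $D$ above both rows, or below both rows, its two interactions, with the moved cell in row $i_1$ and with the moved cell in row $i_2$, cancel. For (B1) we have $j_1>j_2$, so the pair $\{\cone,\ctwo\}$ is itself an NE chain that the swap destroys, contributing $-1$; the cells strictly between the two rows contribute nothing because Lemma~\ref{prop}(a) guarantees that no $1$-cell weakly to the left of $\cone$ lies in those rows. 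Summing over the left cells of $\R_1$ gives the first assertion. The analysis of $\gamma_r^2$ is the mirror image: in a (C$'$1)-swap $\cone$ is a left cell lying above the right cell $\ctwo$, so the swap creates one new NE chain, and the borrowing step (B$'$) swaps a right cell with the lowest left cell above it, again creating exactly one new NE chain; by the analogous region analysis (using the structure of $\R_2$) the interactions with all other cells cancel. Each such move thus contributes $+1$, and summing over the right cells of $\R_2$ gives the second assertion.

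The step I expect to be the main obstacle is the three-cell move (B2.3), which must be shown to preserve $\se$. Writing the three participating cells as $L=(i_1,j_1)$, $\cone=(i_2,j_2)$, $\ctwo=(i_3,j_3)$ with $j_1<j_3<j_2$ and performing the prescribed relocation, a direct count reveals exactly one NE chain among these three cells both before and after the move, so their mutual contribution is $0$. The real work is to show that the NE chains formed with every other $1$-cell are unchanged, and this is precisely where both parts (a) and (b) of Lemma~\ref{prop} enter: part (b) controls the cells in the rows between $L$ and $\cone$, and part (a) controls the cells in the rows between $\cone$ and $\ctwo$, excluding the only configurations that could produce an imbalance, while cells above $L$ or below $\ctwo$ again cancel in pairs. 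One must also verify that every NE chain created or destroyed along the way satisfies the rectangle-containment condition; this follows from the convexity of the Ferrers shape together with the length equality in Lemma~\ref{prop}(c), and should be stated explicitly. Combining the per-move changes with the counts of the first paragraph yields Proposition~\ref{alg}.
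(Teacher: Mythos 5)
Your proposal is correct and takes essentially the same route as the paper's proof: count that $\ptwo$ meets each left $1$-cell of $\R_1$ (resp.\ each right $1$-cell of $\R_2$, with the top one handled by the borrowing step) exactly once, show that each (B1)-swap destroys exactly one NE chain while each (C$'$1)-swap or borrowing creates exactly one, and show the remaining steps — in particular (B2.3) — are neutral, all via Lemma~\ref{prop}(a)--(c). Your explicit above/between/below bookkeeping and the remark on rectangle containment are simply a more detailed write-up of what the paper leaves implicit.
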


\begin{proof} To prove the first part note that pointer $\ptwo$ points to each left $1$-cell in $\R_1$ exactly once in the algorithm $\gamma_r^1$. When that happens, step (B1) is performed during which, by Lemma~\ref{prop}(a), the number of NE chains decreases by one. In the other steps, the number of NE chains remains unchanged. This is trivial for steps (B2.1) and (B2.2), whereas for step (B2.3) it follows from Lemma~\ref{prop}~(a) and~(b).

For the second part, note that pointer $\ptwo$ points to each right $1$-cell in $\R_2$ exactly once in the algorithm $\gamma_r^2$ with possible exception being the top $1$-cell in $\R_2$ to which it never points. In those steps the number of NE chains is increased by one, while otherwise it remains the same. When the top $1$-cell is right, then
Borrowing occurs. It follows from parts~(b) and ~(c) of Lemma~\ref{prop} that the number of NE chains is also increased by one.
\end{proof}

\begin{figure}[ht]
\begin{center}
\includegraphics[width=15cm]{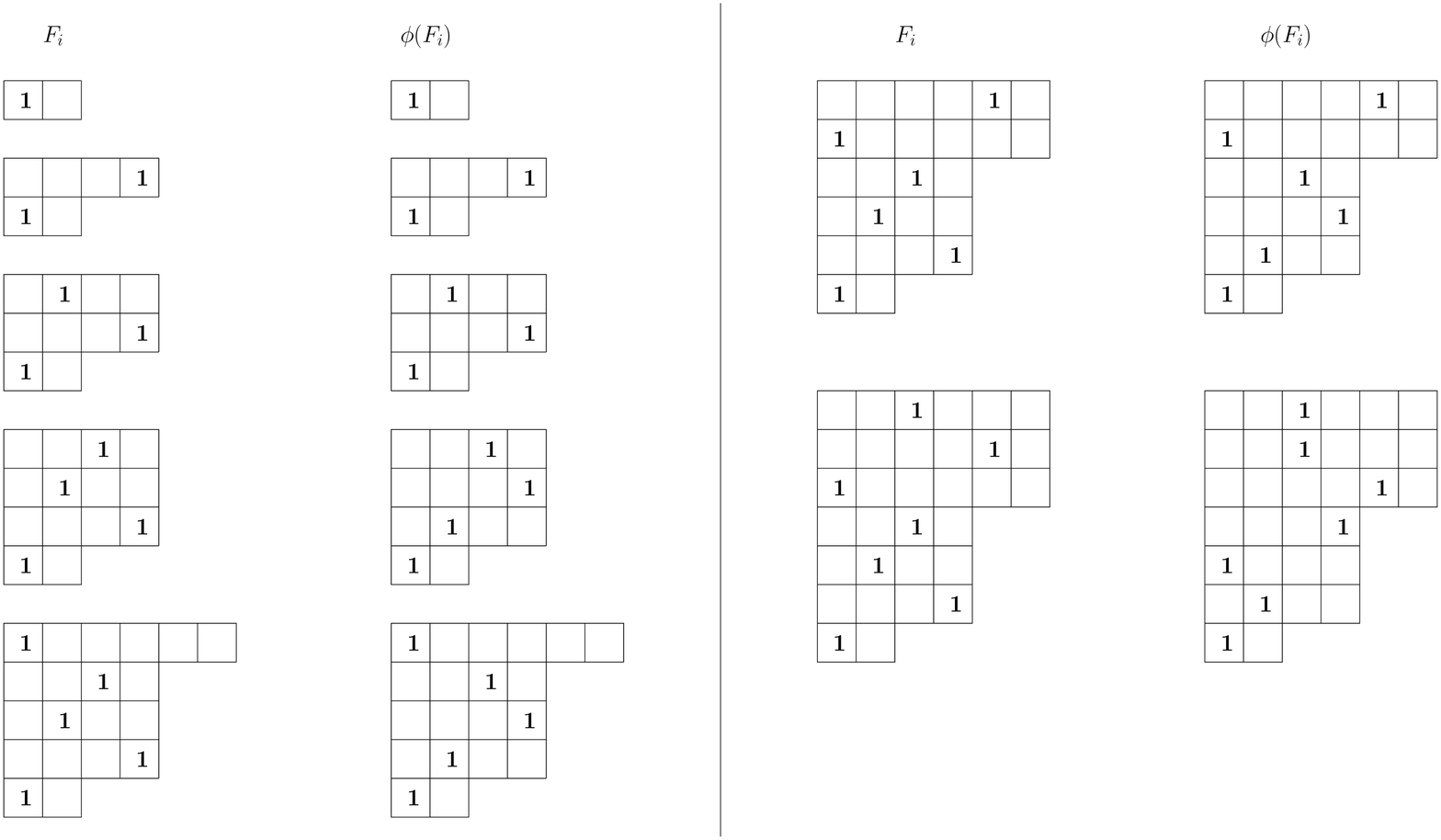}
\end{center}
\caption{Example of the map $\phi$ applied inductively on $F=F_7$. The fillings $F_i$ are restrictions of $F$ on the last $i$ rows.}
\label{bijex}
\end{figure}

\begin{theorem}\label{ferrer}
The map $\phi: \n(\F,\s;A) \rightarrow \n(\F,\s;A)$ is a bijection.
\end{theorem}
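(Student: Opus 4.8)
The plan is to show first that $\phi$ is well defined as a map from $\n(\F,\s;A)$ into itself, and then that it is injective; since $\n(\F,\s;A)$ is finite, injectivity of a self-map forces bijectivity. For well-definedness I would check that every elementary operation used inside $\gamma_r$—the swap of Definition~\ref{expressions}, the three-cell rotation in step (B2.3), and the borrowing in step (B$'$)—merely permutes the columns occupied by $1$-cells while leaving each row with exactly one $1$-cell. Consequently $\gamma_r$, and hence $\phi$, preserves the column-sum vector $\s$ (and, with the empty rows reinserted, the set $A$). That every $1$-cell produced remains inside $\F$ is exactly what the footnote to (B2.3) guarantees, using Lemma~\ref{prop}(c). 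Thus $\phi\colon\n(\F,\s;A)\to\n(\F,\s;A)$, and it remains only to prove injectivity.

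Injectivity I would extract from the recursive structure of $\phi$. Because $\phi$ preserves the top row, from $\phi(F)$ one can strip the top row $r$ to read off both the output filling $F_2=\gamma_r(\phi(F_1))$ and the $1$-cell $C$ of $r$; if $\gamma_r$ can be inverted knowing $C$, one recovers $\phi(F_1)$, then $F_1$ by induction on the number of rows, and finally $F$ by reinserting $r$. So the whole claim reduces to: for a fixed reference cell $C$, the transformation $\gamma_r$ is injective on fillings of $\F\setminus r$. Here I would exploit that the set $\R$ of rows meeting the column of $C$ is a feature of the shape alone and so is known from the output. I would then run reverse versions of the two subroutines—undoing $\gamma_r^2$ on $\R_2$ and then $\gamma_r^1$ on $\R_1$, processing rows from bottom to top and reversing each swap, rotation, or borrowing step. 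Lemma~\ref{prop} is what makes this feasible: parts~(a) and~(b) determine, from the current configuration, which left/right $1$-cell each pointer was examining and hence which of the cases (B1), (B2.1), (B2.2), (B2.3) had been applied, while part~(c) pins down where $\gamma_r^1$ terminated and thereby locates $CRC$, $CLC$, and the boundary between $\R_1$ and $\R_2$.

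The main obstacle is precisely this reconstruction of the combinatorial data $(\R_1,\R_2,CLC,CRC)$ from the output filling and the verification that at each reverse step the configuration forces a \emph{unique} predecessor. The delicate points are the three-cell rotation (B2.3), where the borrowed lowest left cell $L$ must be correctly relocated, and the borrowing step (B$'$) at the top of $\R_2$, where a right $1$-cell was swapped with a left $1$-cell above it; in both, uniqueness of the inverse rests on the ``no $1$'s weakly to the left'' invariants of Lemma~\ref{prop}(a),(b) together with the equal-length condition of Lemma~\ref{prop}(c). Once each elementary step is shown to admit a unique inverse, the reverse subroutines compose to an inverse of $\gamma_r$, yielding injectivity of $\gamma_r$ and, through the recursion above, of $\phi$; finiteness of $\n(\F,\s;A)$ then upgrades injectivity to the asserted bijection.
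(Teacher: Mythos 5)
Your proposal follows essentially the same route as the paper's appendix: establish well-definedness by checking each elementary operation preserves row and column sums, then invert $\gamma_r$ explicitly by first reconstructing the partition into $\R_1$ and $\R_2$ (and the borrowing, if any) from the output via the invariants of Lemma~\ref{prop}, running the two subroutines in reverse from bottom to top, and finally using finiteness of $\n(\F,\s;A)$ to upgrade injectivity to bijectivity. The only difference is one of completeness rather than of strategy: the paper actually writes out the inverse algorithms $\delta_r^2$ and $\delta_r^1$ and the case analysis that you flag as ``the main obstacle,'' whereas your text defers that verification.
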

The proof is quite technical and is given in the appendix. Here we only present an example of the bijection $\phi$ in Figure~\ref{bijex} and explain how  it includes Foata's 
second fundamental transformation. If the moon polyomino is rectangular, then the algorithm $\gamma_r$ has a simpler description because some of the cases can never arise. Namely, if the cell in the second row forms an NE chain with the top cell (a descent) then $\R_1 = \emptyset$, and only the steps (C'1) and (C'2) are performed. Otherwise, $\R_2 = \emptyset$ and, since all columns are of equal height, only the steps (B1) and (B2.1) are performed. In both these cases, via the correspondence between words and rectangular fillings (c.f. Section~\ref{examplerec}) the algorithms $\gamma_r^2$ and $\gamma_r^1$ for rectangles are equivalent to Foata's transformation for words~\cite{foata}.

\subsection{The bijection $\phi$ for left-aligned stack
polyominoes}\label{stack}

Next we extend the bijection $\phi$ to fillings of 
left-aligned stack polyominoes which sends maj to ne. 
Let $\M$ be a left-aligned stack polyomino. 
 As in the case of Ferrers diagrams, we can assume that $A = \emptyset$ and only consider fillings
 without empty rows.  Suppose $M$ is a filling of  $\M$ with top row $r$. 
The map, which is again denoted by $\phi$, 
 is defined inductively on the number of rows of $M$. 
If $M$ has only one row, then $\phi(M)=M$. Otherwise, 
let $\F$ be the maximal Ferrers
diagram in $\M$ that contains the top row and $\F_1$ be $\F$ without the top row. 
Let $F=M \cap \F$ and $F_1 =M \cap \F_1$. To obtain $\phi(M)$ we perform the following steps. (See Figure~\ref{stack1}). 
\begin{enumerate}
 \item Delete the top row of $M$ and get $M_1 = M \backslash r$.
 \item  Apply $\phi$ to $M_1$ and get $M_1'=\phi(M_1)$.
\item Apply the algorithm $\gamma_r$ to the filling $F_1'= M_1' \cap \F_1$ and leave the cells in $M_1'$ outside of $F_1'$ unchanged. Denote the resulting filling by $M_2$.
\item $\phi(M)$ is obtained by adding row $r$ back to $M_2$.
\end{enumerate}

\begin{figure}[ht]
\begin{center}
\includegraphics[width=15cm]{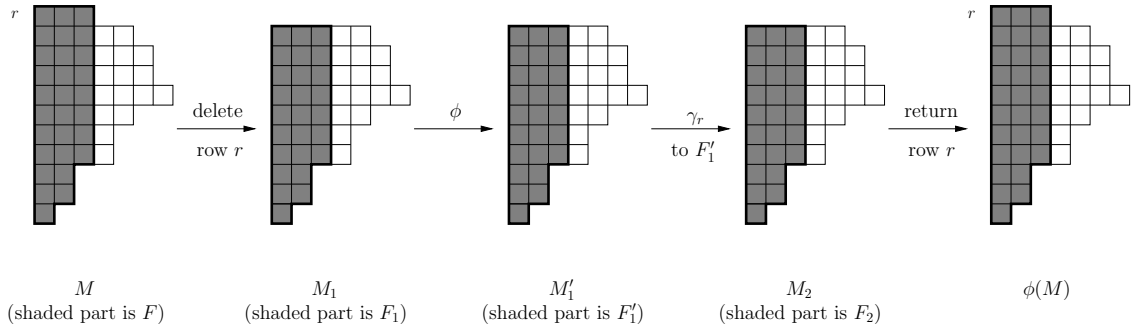}
\end{center}
\caption{Illustration of $\phi$ for left-aligned stack polyominoes.}
\label{stack1}
\end{figure}

It is a bijection from $\n(\M, \s; A)$ to itself since every step is invertible. 
That the step 3 is invertible follows from the proof of Theorem \ref{ferrer}. 

\begin{lemma} \label{mTF}
Using the notation introduced above,
\begin{equation} \label{mtf1}
\m(M)-\m(M_1)= \m(F) -\m(F_1) =\# \R_2(F_1)
\end{equation}
where $\R_2(F_1)$ is defined with respect to the top row $r$ of $F$ as in Section~\ref{ferrers}.
\end{lemma}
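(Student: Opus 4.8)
The plan is to establish the two equalities in~\eqref{mtf1} separately, and the key observation is that both $\m(M)-\m(M_1)$ and $\m(F)-\m(F_1)$ should reduce to the same quantity coming from the Ferrers diagram $\F$ that carries the top row $r$. I would first dispose of the equality $\m(M)-\m(M_1)=\m(F)-\m(F_1)$, which expresses the fact that adding the top row $r$ to the left-aligned stack polyomino only affects the major index through the maximal Ferrers subdiagram $\F$ containing $r$. To see this, I would apply Proposition~\ref{stackprop}, which is available since a left-aligned stack polyomino is (up to reflection) a top-aligned stack polyomino and $\m$ is computed rowwise via the des statistic. Concretely, Proposition~\ref{stackprop} gives $\m(M)=\sum_i \de(M(r_i))$ summed over the nonempty rows, and the only term in which $r$ participates is $\de(M(r))$, computed in the largest rectangle whose top row is $r$; this rectangle lies inside $\F$, so $\de(M(r))=\de(F(r))$. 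Subtracting the analogous expansions for $M$ and $M_1$ (resp. $F$ and $F_1$) all the rows below $r$ contribute identical des-terms, because the rectangle $M(r_i)$ determined by a lower row is unaffected by the presence or absence of $r$. Hence the two differences coincide and equal $\de(F(r))$, the number of rows below $r$ whose $1$-cell forms an NE chain with the $1$-cell of $r$.

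Next I would identify $\de(F(r))$ with $\#\R_2(F_1)$. Recall from Section~\ref{ferrers} that, with $C$ the $1$-cell of $r$, the set $\R$ consists of all rows of $F_1'$ meeting the column of $C$, the left cells are those strictly left of $C$ and the right cells those weakly right of $C$, and $\R_2$ is the set of rows in $\R$ not meeting the column of $CRC$. A row of $F(r)$ forms an NE chain with $r$ precisely when its $1$-cell is a right cell, i.e.\ lies weakly to the right of $C$ and inside the rectangle $F(r)$; so $\de(F(r))$ counts exactly the right cells of $\R$. It then remains to check that the right cells of $\R$ are exactly the right cells lying in $\R_2$, equivalently that no right cell lies in $\R_1$. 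This is where I expect the main obstacle to be, and it is precisely the content built into the definitions of $CRC$, $\R_1$, and $\R_2$: $\R_1$ consists of the rows weakly below $CRC$ meeting the column of $CRC$, and by the choice of $CRC$ as the leftmost (and lowest) right cell above $CLC$, together with the convexity and intersection-freeness of the polyomino, every right cell other than those accounted for in $\R_1$ belongs to $\R_2$. I would make this precise by a careful case analysis using Lemma~\ref{prop}, particularly part~(c), which pins down the columns of the cells processed in $\gamma_r^1$.

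The one subtlety to flag is that $\R_2$ is defined with respect to $F_1'=\phi(F_1)$, whereas the des-count $\de(F(r))$ is naturally read off from the original filling $F$ (equivalently $F_1$). Since $\phi$ restricted to $\F_1$ is a bijection preserving column sums and the set of occupied columns, the multiset of right versus left cells relative to the fixed column of $C$ is the same in $F_1$ and in $F_1'$; this invariance is what lets me pass freely between the row $r$'s NE-chain count in $F$ and the cardinality $\#\R_2(F_1)$. Thus the second equality follows, completing the proof of~\eqref{mtf1}.
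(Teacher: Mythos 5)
Your argument for the first equality rests on applying Proposition~\ref{stackprop} to $M$, but that proposition is stated and proved only for \emph{top-aligned} stack polyominoes, while $M$ lives on a \emph{left-aligned} one; no reflection converts one into the other while preserving the row-based statistics $\de$ and $\m$ (a transposition would exchange the roles of rows and columns and destroy the row-sum condition). The paper instead obtains the first equality directly from Definition~\ref{defmajal}: letting $S$ be the restriction of $M$ to the rows not completely contained in $\F$, grouping the maximal rectangles gives $\m(M)=\m(F)+\m(S)-\m(F\cap S)$ and $\m(M_1)=\m(F_1)+\m(S)-\m(F_1\cap S)$, and $F\cap S=F_1\cap S$ because $r$ lies entirely inside $\F$. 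Moreover, even where Proposition~\ref{stackprop} does apply (to $F$ and $F_1$), your claim that only one term changes is false: in that proposition $M(r_i)$ is the largest rectangle whose \emph{bottom} row is $r_i$, so it extends upward and contains the top row $r$ for every $i$. Hence $\m(F)-\m(F_1)=\sum_{i\geq 2}\bigl(\de(F(r_i))-\de(F_1(r_i))\bigr)$ is a sum of $0/1$ contributions over all rows --- the number of rows $r_i$ for which deleting $r$ destroys a descent in $F(r_i)$ --- and is not a single des value $\de(F(r))$.

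The identification with $\#\R_2(F_1)$ is also incorrect as written. First, the orientation is backwards: a $1$-cell $D$ below $C$ forms an NE chain with $C$ exactly when $D$ is strictly to the \emph{left} of $C$, i.e.\ when $D$ is a left cell, not a right cell. Second, $\#\R_2(F_1)$ counts \emph{rows} of $\R_2$, whether their $1$-cells are left or right. A small example defeats the whole chain: let $\F$ be a $4\times 3$ rectangle with $1$-cells at (row, column) positions $(1,2),(2,1),(3,3),(4,1)$, so $C=(1,2)$. Then $CLC=(2,1)$, no right cell lies above it, $CRC$ does not exist, and $\R_2=\{2,3,4\}$, so $\#\R_2(F_1)=3=\m(F)-\m(F_1)=4-1$; but $\R$ contains only one right cell, and only two rows below $r$ form an NE chain with $C$. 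So neither quantity you propose equals $\#\R_2(F_1)$. The correct (and genuinely nontrivial) step is to show that $\de(F(r_i))-\de(F_1(r_i))=1$ precisely when $r_i\in\R$ and the topmost $1$-cell of $F_1$ lying within the columns of $r_i$ is a left cell, and then to match this set of rows with $\R_2$ using the definitions of $CLC$ and $CRC$. Finally, your worry about $F_1$ versus $F_1'=\phi(F_1)$ is moot here: the lemma's $\R_2(F_1)$ is computed from $F_1$ itself, and the comparison $\#\R_2(F_1)=\#\R_2(F_1')$ is a separate issue handled later via Lemma~\ref{maxcell}; in any case your argument for it (preservation of column sums) would not suffice, since $\R_2$ depends on the positions of $CLC$ and $CRC$ and not merely on which columns are occupied.
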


\begin{proof}
Let $S$ be the filling  $M$ restricted to the rows which are not
completely contained in $\F$. From the definition of $maj$, we have
\begin{align}
\m(M)&=\m(F) + \m(S) -\m(F \cap S) \label{one} \\
\m(M_1)&=\m(F_1) + \m(S) -\m(F_1 \cap S) \label{two}
\end{align}
Note that $F \cap S =F_1 \cap S$, hence~\eqref{one} and
~\eqref{two} give the first equality in~\eqref{mtf1}. The second
one is readily checked using Proposition~\ref{majstack} for
major index of stack polyominoes.
\end{proof}

The rest of this subsection is devoted to proving that the map $\phi$ has the
desired  property $\m(M)=\se(\phi(M))$. We  shall need one more lemma.

Let $C$ be a $1$-cell  in a filling $M$ of a left-aligned
stack polyomino. We  say that $C$ is  \textit{maximal}
if there is no 1 in $M$ which is strictly above and weakly to
the left of $C$. Clearly, a maximal cell must be the
highest $1$-cell in its column. For a maximal cell $C$ in
$M$, we use $t(C,M)$ to denote the index of the leftmost column
(the columns are numbered from left to right) to the right of $C$
that contains a $1$-cell which together with $C$ forms an NE chain.
If such a column does not exist set $t(C,M)=\infty$.

\begin{lemma} \label{maxcell}
A $1$-cell $C$ in $M$ is maximal if and only if the highest 
$1$-cell $C'$ in the same column in $M'=\phi(M)$ exists and is maximal. 
Moreover, $t(C,M)=t(C',M')$
when they are both maximal. 
\end{lemma}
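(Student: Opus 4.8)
The plan is to argue by induction on the number of rows of $M$, following exactly the recursive definition of $\phi$ for left-aligned stack polyominoes. The base case, where $M$ has a single row, is immediate since $\phi(M)=M$. For the inductive step write $M=M_1+r$, where $r$ is the top row and $M_1=M\backslash r$, and recall that $\phi(M)=M_2+r$, where $M_1'=\phi(M_1)$ and $M_2$ is obtained from $M_1'$ by applying $\gamma_r$ to $F_1'=M_1'\cap \F_1$ (the cells outside $\F_1$ being left unchanged). Since the top row $r$ is added back unchanged, $M$ and $M'=\phi(M)$ differ only through the passage $M_1\rightsquigarrow M_2$ together with the common top $1$-cell $C_r$. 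I would therefore first reduce the statement to a claim about $M_1$ versus $M_2$, and then account separately for the effect of inserting $C_r$.

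The effect of $C_r$ is easy to describe, so I would dispose of it first. Say $C_r$ lies in column $p$. Because $C_r$ sits in the top row, it is strictly above and weakly to the left of every $1$-cell in a column $\geq p$, and strictly to the right of every $1$-cell in a column $<p$. Hence inserting $C_r$ destroys the maximality of the highest cell of each occupied column $\geq p$, keeps $C_r$ itself maximal, and leaves the maximal cells in columns $<p$ exactly as they were; moreover inserting a single cell can only add obstructions above-left of other cells, so it can never promote a non-maximal cell to a maximal one. Thus the maximal cells of $M_1+r$ are $C_r$ together with the maximal cells of $M_1$ in columns $<p$, and likewise for $M_2+r$. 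For the $t$-values, $t(C_r,\cdot)=\infty$ since every cell to the right of $C_r$ lies strictly below it and so cannot form an NE chain with it, while for a maximal cell $C$ in a column $<p$ the only new NE witness possibly introduced is $C_r$ in column $p$; whether $C_r$ qualifies depends only on the shape $\M$, so the new $t$-value is obtained from $t(C,M_1)$ (respectively $t(C,M_2)$) by one and the same shape-determined operation. Consequently the full statement for $M$ and $M'$ follows once one knows that $M_1$ and $M_2$ carry maximal cells in the same columns and with the same $t$-values.

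Since $M_1\rightsquigarrow M_2$ factors as $M_1\to M_1'=\phi(M_1)$ followed by $M_1'\to M_2=\gamma_r(F_1')$, the first passage is handled by the induction hypothesis, so the crux is the following claim, which I expect to be the main obstacle: \emph{the algorithm $\gamma_r$ preserves the set of columns containing a maximal cell and the $t$-value of each such cell.} Here one uses that $\gamma_r$ preserves all column sums (each swap, the three-cell rotation of step (B2.3), and the borrowing step (B$'$) permute $1$-cells among columns without altering any column sum), so the highest cell of each occupied column survives; the cells lying outside $\F_1$ are untouched and present no difficulty, and inside $\F_1$ one follows the pointers $\pone$ and $\ptwo$ through $\gamma_r^1$ and $\gamma_r^2$ and checks that each elementary move conserves both ``being a top record'' and the NE-witness index $t$. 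The governing input is Lemma~\ref{prop}: parts (a) and (b) guarantee that the rows swept between the pointers contain no $1$-cells weakly to the left of the cell being moved, so a move can never carry a $1$-cell above a record to its left and destroy it, while part (c), which fixes the length of the column of $\cone$ and the terminal position of $\pone$, controls the borrowing step. The technical heart is to verify that no interchange opens up a closer NE witness to the right of a maximal cell nor closes off an existing one, so that the record structure and the index $t$ are conserved across the entire sweep; this is precisely where the constraints of Lemma~\ref{prop} must be invoked case by case on the steps (B1), (B2.1)--(B2.3), (C$'$1), (C$'$2) and the borrowing (B$'$).
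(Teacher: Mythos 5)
Your induction scheme, the treatment of the top-row cell, and the reduction to comparing $M_1$ with $M_2=\gamma_r(M_1')$ all match the paper's proof. The gap is exactly at the point you yourself flag as ``the technical heart'': you assert that $\gamma_r$ preserves the columns of the maximal cells and their $t$-values and propose to verify this by a case analysis over the steps (B1), (B2.1)--(B2.3), (B$'$), (C$'$1), (C$'$2), but you do not carry this out, and as stated the claim is both broader than needed and harder than necessary. For cells in columns $\geq i^*$ (the column of the top-row $1$-cell) the algorithm $\gamma_r$ moves right $1$-cells around quite violently (borrowing lifts a right cell to a higher row, (B2.3) permutes three cells), so a direct check of ``record preservation'' there would be delicate --- and, fortunately, irrelevant, since as you note adding the top row destroys maximality in all columns $\geq i^*$ anyway.

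The observation that closes the gap in the paper --- and that your write-up does not isolate --- is that every maximal cell of $M$ other than the top-row cell lies in a column strictly left of $i^*$, hence is a \emph{left} cell for $\gamma_r$, and that $\gamma_r$ preserves the relative positions (the top-to-bottom sequence of occupied columns) of the left $1$-cells; this is where Lemma~\ref{prop}(a),(b) enter, guaranteeing that whenever a left cell is displaced vertically no other left cell lies in the intervening rows. Since maximality of the highest cell of a column $i<i^*$ and the left-cell portion of its $t$-value depend only on that sequence, preservation is immediate, and the remaining two cases ($t(C_1,M_1)=a<i^*$, where the witness is itself a left cell and persists, versus $a\ge i^*$, where the witness in $M$ becomes the top-row cell in column $i^*$) finish the argument. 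You should replace the proposed exhaustive case check by an explicit proof of this single invariant of $\gamma_r$ and then run the two-case computation of $t$; without that, your text is a plan for a proof rather than a proof.
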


\begin{proof}
We proceed by induction on the number of rows of $M$ and keep the notation as 
in Figure~11. The case
when $M$ has one row is trivial.

Suppose that the top row $r$ of $M$
has a $1$-cell $C^*$ in the column $i^*$. The cell $C^*$ is
maximal in both $M$ and $M'$ and $t(C^*,M)=\infty= t(C^*,M')$. 
Note that a $1$-cell $C \neq C^*$ in $M$ (resp. $M'$)
is maximal if and only if $C$ is in column $i < i^*$ and is
maximal in $M_1$ (resp. $M_2$).

Denote by $C_1$, $C_1'$, and $C_2$ the highest $1$-cells in
column $i < i^*$ in the fillings $M_1$, $M_1'=\phi(M_1)$, and $M_2$,
respectively. By the induction hypothesis, $C_1$ is maximal in $M_1$
if and only if $C_1'$ is maximal in $M_1'$. $M_2$ is obtained 
from $M_1'$ by
applying the algorithm $\gamma_r$ with respect to the row $r$ to
$F_1'$. $C_1'$ is a left cell in this algorithm, and since
$\gamma_r$ does not change the relative position of the left $1$-cells,
$C_2$ is maximal in $M_2$.

It only remains to prove that $t(C,M)=t(C',M')$. Suppose
$t(C_1,M_1)=t(C_1',M_1')=a$ (they are equal by the inductive
hypothesis).

If $a < i^*$ then $t(C, M)=a$. In $M_1'$ 
it corresponds to  a $1$-cell $D'$ in a column left of
$C^*$ such that $C_1', D'$ form an NE chain. 
Since the algorithm $\gamma_r$ preserves the column sums and the relative 
positions of left $1$-cells, we have $t(C_2,M_2)=a$, and hence $t(C',M')=a$.

If $a \geq i^*$ then $t(C,M)=i^*$. The fact that $t(C_1',M_1')=a
\geq i^*$ means that there is no left $1$-cell in $M_1'$ above and to
the right of $C_1'$ and, since $\gamma_r$ does not change the
relative positions of the left $1$-cells, there will be no left $1$-cell
above and to the right of $C_2$ in $M_2$. So, in this case, we
conclude $t(C',M')=i^*=t(C,M)$.
\end{proof}

\begin{theorem} \label{thmphi} If $M$ is a filling of a left-aligned stack
polyomino $\M$, then $\m(M)=\se(\phi(M))$.
\end{theorem}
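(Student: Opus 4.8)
The plan is to prove $\m(M)=\se(\phi(M))$ by induction on the number of rows of $M$, exactly matching the recursive definition of $\phi$ for left-aligned stack polyominoes. The base case of a single row is trivial since both statistics are zero. For the inductive step, I keep the notation of the construction: $M_1 = M \backslash r$, $M_1' = \phi(M_1)$, $M_2$ is obtained from $M_1'$ by running $\gamma_r$ on $F_1' = M_1' \cap \F_1$, and $\phi(M)$ is $M_2$ with the top row $r$ re-added. The inductive hypothesis gives $\m(M_1)=\se(M_1')=\se(\phi(M_1))$, so it suffices to show that the change in $\m$ caused by adding $r$ to $M_1$ equals the change in $\se$ caused by first applying $\gamma_r$ and then adding $r$ to $M_2$.

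The strategy is to account separately for the two sides of this equation. On the maj side, Lemma~\ref{mTF} already does the work: it tells us that $\m(M)-\m(M_1) = \#\R_2(F_1)$, the number of rows in $\R_2$ computed with respect to the top row $r$ of $F$. On the ne side, I would decompose $\se(\phi(M)) - \se(M_1')$ into two contributions. First, passing from $M_1'$ to $M_2$ via $\gamma_r$ changes the number of NE chains, and by Proposition~\ref{alg} this change equals (minus the number of left $1$-cells in $\R_1$) plus (the number of right $1$-cells in $\R_2$). Second, re-adding the top row $r$ with its single $1$-cell $C$ creates new NE chains, one for each $1$-cell in $M_2$ that lies strictly below and to the right of $C$ within $\M$; I need to count these precisely. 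The goal is to show that the net effect of both contributions is exactly $\#\R_2$.

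The key bookkeeping step is counting the NE chains that the top cell $C$ forms in $M_2$. Here I would invoke Lemma~\ref{maxcell}: the maximal cells and their $t$-values are preserved by $\phi$, so the NE chains formed with $C$ after the algorithm can be matched against a controlled set of cells. The point of running $\gamma_r$ first is that it redistributes the left and right cells of $\R$ so that, after re-adding $r$, the new NE chains with $C$, combined with the chains destroyed/created inside $\gamma_r$, telescope to $\#\R_2$. Concretely, each left $1$-cell destroyed in $\R_1$ and each right $1$-cell created in $\R_2$ should be matched either with a chain that $C$ gains or loses, in such a way that the only surviving net count is the number of rows in $\R_2$. This is where the careful definitions of $CLC$, $CRC$, $\R_1$, $\R_2$, and the borrowing step in $\gamma_r^2$ are used, together with Lemma~\ref{prop}(c) guaranteeing that $\pone$ ends on the last row of $\R_1$.

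The main obstacle I expect is this final matching: verifying that adding back the row $r$ contributes exactly the right number of NE chains to offset the discrepancy between $\#\R_2$ and the signed count $(\#\{\text{right cells in }\R_2\}) - (\#\{\text{left cells in }\R_1\})$ from Proposition~\ref{alg}. The subtlety is that $\gamma_r$ deliberately rearranges cells precisely so that $C$ sees the correct number of cells to its lower-right, and one must track how left cells become right cells (and vice versa) across the column of $C$. I would handle this by showing that after $\gamma_r$ the cells of $\R$ strictly below $C$ that form an NE chain with $C$ are in natural bijection with the rows of $\R_2$, using the fact (Lemma~\ref{maxcell}) that the algorithm preserves which cells are maximal and their first NE-chain column. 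Once this count is pinned down, combining it with Proposition~\ref{alg} and Lemma~\ref{mTF} closes the induction.
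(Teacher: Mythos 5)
Your skeleton matches the paper's proof: induction on the number of rows, Lemma~\ref{mTF} for the change in $\m$, Proposition~\ref{alg} for the change in $\se$ under $\gamma_r$, plus a count of the new NE chains created by re-inserting the top cell $C$. But the execution of the decisive counting step is off in two places. First, the chains gained by re-adding $C$ are formed with the cells of $\R(F_2)$ lying strictly below and to the \emph{left} of $C$ (the ``left'' $1$-cells), not to the right as you wrote. With the correct count the ledger closes by pure bookkeeping: since $\gamma_r$ preserves column sums, $\#\{\text{left cells in }\R(F_2)\}=\#\{\text{left cells in }\R(F_1')\}$, and since every left cell of $\R\setminus\R_1$ lies in $\R_2$ while every row of $\R_2$ carries exactly one $1$-cell,
\[
\#\{\text{left in }\R(F_1')\}-\#\{\text{left in }\R_1(F_1')\}+\#\{\text{right in }\R_2(F_1')\}=\#\R_2(F_1').
\]
No appeal to Lemma~\ref{maxcell} is needed for this, and your proposed ``natural bijection between the cells forming NE chains with $C$ and the rows of $\R_2$'' is false as stated: $\R_2$ generally contains right cells, which form no NE chain with $C$.

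Second, and more seriously, the quantity produced by this telescoping is $\#\R_2(F_1')$, computed from the critical cells of $F_1'=\phi(M_1)\cap\F_1$, whereas Lemma~\ref{mTF} produces $\#\R_2(F_1)$, computed from $F_1=M_1\cap\F_1$. These are a priori different numbers, because $\R_2$ is defined through the positions of $CLC$ and $CRC$, which depend on the filling. Your proposal never separates the two, so the induction does not close as written. This is exactly where Lemma~\ref{maxcell} is actually deployed in the paper: $CLC$ is the topmost maximal cell other than the one in the first row, so by the lemma it occupies the same column in $F_1$ and $F_1'$ (or exists in neither), and since $t(CLC,M_1)=t(CLC',M_1')$ the cells $CRC$ and $CRC'$ also occupy the same column (or both fail to exist); hence $\#\R_2(F_1)=\#\R_2(F_1')$. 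Once you insert this identity and fix the left/right orientation above, your argument coincides with the paper's.
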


\begin{proof}
Again we proceed by induction on the number of rows of $M$. The case
when $M$ has one row is trivial. We use the notation as in Figure 11, and 
let $F_2=M_2 \cap \F_1$.

By Proposition~\ref{alg}, the algorithm $\gamma_r$ on $M_1'$ decreases ne by one for each
left $1$-cell in $\R_1(M_1')$ and increases it by one for each right
$1$-cell in $\R_2(M_1')$. Therefore,
\begin{align*}
\se(\phi(M))&=\se(M_2)+\#\{\text{left $1$-cells in }\R(F_2)\} \\
&=\se(M_1') - \#\{\text{left $1$-cells in } \R_1(F_1')\}+\#\{\text{right
$1$-cells in }\R_2(F_1')\} \\
& \mbox{ \ \ }+\#\{\text{left $1$-cells in }\R(F_2)\}
\end{align*}
Since $\gamma_r$ preserves the column sums, \[\#\{\text{left $1$-cells in }\R(F_2)\}=\#\{\text{left $1$-cells
in }\R(F_1')\}\] hence
\[\se(\phi(M))=\se(M_1')+\#\R_2(F_1').\]

One the other hand, from Lemma~\ref{mTF} and the induction
hypothesis, one gets
\[\m(M)=\m(M_1)+\# \R_2(F_1) = \se(M_1')+ \#\R_2(F_1).\]
So, it suffices to show that

\begin{equation} \label{r} \#\R_2(F_1)=\#\R_2(F_1').
\end{equation}

 To show equation \eqref{r}, note that
for a filling of a Ferrers diagram $F$ the number of rows in $\R_2(F)$ is 
determined by the column indices of $CLC$ (critical left cell) and
$CRC$ (critical right cell). The $CLC$ is the topmost left $1$-cell which, if exists, is the topmost \emph{maximal} cell besides the $1$-cell in the first row. 
By Lemma \ref{maxcell}, $CLC$ in 
$F_1 \subseteq M_1$ is in the same column as $CLC'$ in $F_1' \subseteq M_1'$, 
or neither of them exists. The latter case is trivial, as $\R_2(F)$ is empty. In the first case, 
 let $|r|$ denote the length of row $r$. Then 
 $CRC$,  the critical right cell of $F_1$, is in the column
$t(CLC,M_1)$ if $t(CLC,M_1) \leq |r|$ and does not exist
otherwise. Similarly, the critical right cell of $F_1'$, 
 $CRC'$, is in the column $t(CLC',M_1')$
if $t(CLC,M_1') \leq |r|$, and does not exist otherwise. By
Lemma~\ref{maxcell}, $t(CLC,M_1)=t(CLC',M_1')$, and this
implies~\eqref{r}.
\end{proof}

\subsection{ The case of a general moon polyomino}

Now we consider the case when $\M$ is a general moon polyomino. We label the rows
of $\M$ by $r_1, \dots, r_n$ from top to bottom. Let $\N$ be the
unique left-aligned stack polyomino whose sequence of row lengths
is equal to $|r_1|, \dots, |r_n|$ from top to bottom. In other
words, $\N$ is the left-aligned polyomino obtained by rearranging
the columns of $\M$ by length in weakly decreasing order from left
to right. For the definitions that follow, we use an idea of M.~Rubey~\cite{rubey} to
describe an algorithm that rearranges the columns of $\M$ to obtain
$\N$ (Figure~\ref{alpha}). 

\begin{center}
\begin{tabular}[b]{|p{15cm}|}
\hline
\underline{Algorithm $\alpha$ for rearranging $\M$:}\\
\\
1. Set $\M'=\M$.\\
2. If $\M'$ is left aligned go to (4).\\
3. If $\M'$ is not left-aligned consider the largest rectangle
$\R$ completely contained in $\M'$ that contains $c_1$, the leftmost
column of $\M'$. Update $\M'$ by letting $\M'$ be the polyomino obtained by moving the leftmost column of $\R$ to the right end. Go to (2).\\
4. Set $\N=\M'$.\\ \hline
\end{tabular}
\end{center}

\begin{figure}[ht]
\begin{center}
\includegraphics[width=15cm]{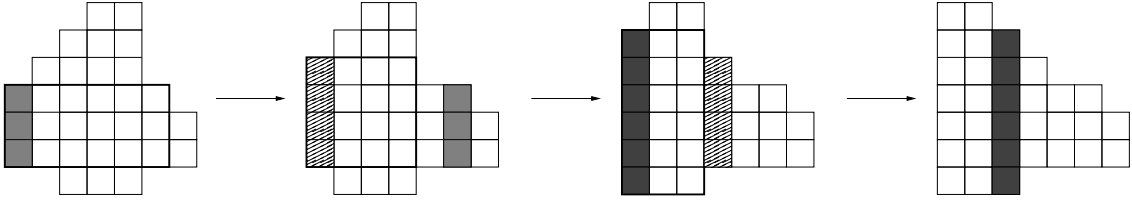}
\end{center}
\caption{The algorithm $\alpha$.}
\label{alpha}
\end{figure}

In this section we give two bijections
$f,g: \n(\M,\s; A) \rightarrow \n(\N,\s';A)$ which preserve the major
index and the number of NE chains, respectively. The sequence
$\s'$ is obtained by rearranging the sequence $\s$ in the same way
$\N$ is obtained by rearranging the columns of $\M$.

\subsubsection{Bijection $f:\n(\M,\s;A) \rightarrow \n(\N,\s';A)$ such that
$\m(M)=\m(f(M))$}

Let $R$ be a filling  of a rectangle $\R$ with column sums $s_1, s_2,
\dots, s_m$. First, we describe a transformation $\tau$ which
gives a filling of $\R$ with column sums $s_2, s_3, \dots, s_m,
s_1$ and preserves the major index. Recall that a descent in $R$ is a pair of 1's in consecutive nonempty rows that form an NE chain. Define an \textit{ascent} to be a pair of 1's in two consecutive nonempty rows that does not form an NE chain. Suppose the first column of $R$ has
$k$ many 1's: $C_1, \dots, C_k$ from top to bottom.
\begin{figure}[ht]
\begin{center}
\includegraphics[width=15cm]{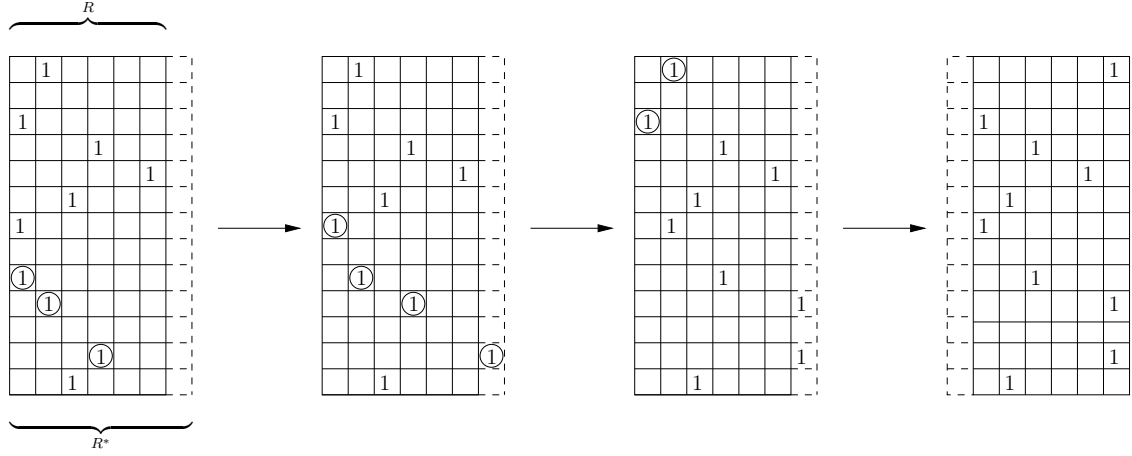}
\end{center}
\caption{Illustration of the transformation $\tau$ for rectangles.}
\label{tau}
\end{figure}

\underline{Transformation $\tau$ on rectangles:}
\begin{enumerate}
\item Let $R^*$ be the rectangle obtained by adding one empty column to
$R$ from right.  Process the $1$-cells $C_1, \dots, C_k$ from
bottom to top (see Figure~\ref{tau}):

\item For $r=k,k-1, \dots, 1$ do the following:

Let $D_a$ be the lowest $1$-cell above $C_r$ in $R^*$ and $D_b$ be
the highest $1$-cell below $C_r$ in $R^*$.

\begin{enumerate}
 \item If $D_a$ does not exist or $D_a=C_{r-1}$,  and $D_b$ does not exist, just move $C_r$
 horizontally to the last column of $R^*$;

 \item If (i) $D_a$ does not exist or $D_a=C_{r-1}$,  but $D_b$ exists, or (ii) both $D_a$ and
$D_b$ exist, and  $D_a$ is to the right of $D_b$:

Let $D_1=C_r, D_2=D_b, D_3,\dots, D_p$ be the maximal chain of consecutive ascents in $R^*$.
Move $D_i$ horizontally to the
column of $D_{i+1}$, for $i=1,\dots, p-1$, and move $D_p$
horizontally to the last column of $R^*$. (If $D_p$ is in the last column of $R^*$, it remains there.)

\item  If $D_a \neq C_{r-1}$,  and ($D_a$ is weakly to the left of $D_b$  or $D_b$ does not exist):

Let $D_1=C_r, D_2=D_a, D_3,
\dots, D_p$ be the maximal chain of consecutive descents in $R^*$.
Move $D_i$ horizontally to the column of $D_{i+1}$, for
$i=1,\dots, p-1$, and move $D_p$ horizontally to the last column
of $R^*$.
\end{enumerate}

\item Delete the first column of $R^*$.
\end{enumerate}

Note that each iteration in Step 2 decreases the number of 1's in the first
column of $R^*$ by one and increases the number of 1's in the last
column by one, while the other column sums remain unchanged.

Now we are ready to define the map $f$. Suppose $M \in \n(\M,\s; A)$.
We perform the algorithm $\alpha$ on $M$ to transform the shape $\M$ to
$\N$. While we are in Step 3, instead of just moving the first column of $\R$ to the right end, we perform the algorithm $\tau$ on the filling $R$ of $\R$. $f(M)$ is defined to be the resulting filling of $\N$.

\begin{lemma} \label{lemmatau} The transformation $\tau$ is invertible. Moreover, it preserves the descents and hence $\m(R)=\m(\tau(R))$.
\end{lemma}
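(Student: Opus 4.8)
The plan is to establish the two claims of Lemma~\ref{lemmatau} separately: that $\tau$ preserves descents (and hence the major index), and that $\tau$ is invertible. I would treat the descent-preservation claim as the primary goal, since invertibility will essentially follow from understanding the structure of the transformation well enough to reverse it.

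First I would set up notation carefully by tracking what happens to the nonempty rows of $R^*$ (the rectangle augmented with one empty column on the right) during a single iteration of Step 2. The key observation to isolate is that each case --- (a), (b), and (c) --- moves a chain of $1$-cells horizontally, and horizontal moves can only change the \emph{columns} of the $1$-cells, never which rows are nonempty. Recall that a descent is a pair of $1$-cells in \emph{consecutive nonempty rows} forming an NE chain, and an ascent is such a pair that does \emph{not} form an NE chain. So the strategy is to argue that for every pair of consecutive nonempty rows, the ascent/descent status of the pair of $1$-cells in those rows is unchanged by each iteration. The heart of the matter is the definition of the maximal chains: in case (b) we move a maximal chain of consecutive \emph{ascents}, and in case (c) a maximal chain of consecutive \emph{descents}. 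I would verify that sliding each $D_i$ to the column of $D_{i+1}$ (and $D_p$ to the last column) preserves exactly the NE-chain relations along the chain, precisely because the cells in the chain all share the same ascent-or-descent type; the cells at the two ends of the chain (the boundary with cells not in the chain) retain their status by the maximality of the chain. Once descents are preserved throughout all iterations of Step 2, and since deleting the first column in Step 3 removes a column that (after processing) is empty, it follows that $\de(R(r_i))$ is unchanged for each nonempty row, and hence $\m(R)=\m(\tau(R))$ directly from Definition~\ref{defrec} and~\eqref{majrec}.

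For invertibility, the plan is to describe the reverse procedure explicitly. Since $\tau$ moves $k$ many $1$-cells out of the first column into the last column one at a time (processing $C_k, \dots, C_1$ from bottom to top), and each iteration is a well-defined rearrangement of a single chain terminating in the rightmost column, I would argue that the inverse processes the $1$-cells of the \emph{last} column from top to bottom, reconstructing the chains and sliding them back leftward. The crucial point is that at each stage the case distinction (a)/(b)/(c) is recoverable from the output configuration: the identity of the chain and whether it was an ascent-chain or a descent-chain is determined by the NE-chain relations among the $1$-cells, which are preserved, so one can read off which case applied and undo it. The note after Step 2 --- that each iteration moves exactly one $1$ from the first to the last column while fixing all other column sums --- guarantees the bookkeeping is consistent and the inverse terminates correctly with column sums $s_1, s_2, \dots, s_m$.

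The main obstacle I anticipate is the careful case analysis in verifying descent-preservation, particularly at the \emph{endpoints} of each maximal chain, where a chain cell meets a cell outside the chain. One must check that sliding $D_p$ to the last column does not create or destroy an NE chain with whatever $1$-cell lies in the adjacent nonempty row outside the chain, and similarly at the top of the chain where $D_1 = C_r$ was removed from the first column. This is where the distinction between cases (a), (b), and (c) --- driven by the relative horizontal positions of $D_a$ and $D_b$ (the nearest $1$-cells above and below $C_r$) --- becomes essential, and where a diagram such as Figure~\ref{tau} does most of the persuading. The rest is routine verification that horizontal motion within a chain of uniform type preserves type.
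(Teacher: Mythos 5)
Your treatment of descent preservation matches the paper's argument in substance --- the paper simply asserts that ``each iteration of Step 2 of $\tau$ preserves the positions of descents,'' and your chain-by-chain verification (uniform ascent/descent type within each maximal chain, plus a boundary check at the chain's endpoints) is exactly the expansion one would write out to justify that assertion. Where you genuinely diverge is on invertibility. The paper disposes of it in one line: $\tau^{-1} = \rho \circ \tau \circ \rho$, where $\rho$ is rotation of the rectangle by $180^\circ$. This exploits the symmetry of the construction (rotation swaps first/last columns, top/bottom, and ascent-chains with descent-chains) and requires no analysis of the output configuration at all. Your route --- explicitly reconstructing, from $\tau(R)$, which of cases (a)/(b)/(c) applied at each iteration and undoing it --- is workable in principle but carries a real burden you have not discharged: you assert that ``the case distinction is recoverable from the output configuration'' without showing it. In particular, several $1$-cells accumulate in the last column over the $k$ iterations, and you would need to argue that the order in which they arrived, and the identity and type of each shifted chain, can be read off unambiguously from the final filling (the chains are destroyed as positional objects once their cells are slid). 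That is precisely the bookkeeping the paper's conjugation trick avoids. I would recommend either adopting the $\rho \circ \tau \circ \rho$ argument or, if you keep the explicit inverse, supplying the recoverability lemma that your sketch currently takes on faith.
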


\begin{proof} The map $\tau$ is invertible since  $\tau^{-1}$ can be obtained by taking
$\rho \circ \tau \circ \rho$, where $\rho$ is the rotation of rectangles by $180^\circ$. 
Moreover, each iteration of Step 2 of $\tau$ preserves the positions of descents, hence the 
second part of the claim holds.
\end{proof}

\begin{proposition} \label{propf} The map $f:\n(\M,\s; A) \rightarrow \n(\N,\s'; A)$ is a bijection and $\m(M)=\m(f(M))$.
\end{proposition}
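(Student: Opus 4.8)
The plan is to prove Proposition~\ref{propf} by leveraging the work already done on the single-column transformation $\tau$ and on the shape-rearrangement algorithm $\alpha$. The map $f$ is defined as the composition of the column moves dictated by $\alpha$, where each move of a column is replaced by an application of $\tau$ to the relevant rectangular sub-filling. Since $\alpha$ terminates (each step moves one column of a maximal rectangle to the right end, strictly progressing toward a left-aligned shape) and transforms $\M$ into $\N$ with column sums $\s'$, the map $f$ is well-defined as a map $\n(\M,\s;A) \rightarrow \n(\N,\s';A)$ provided each intermediate filling remains a legitimate 01-filling with the prescribed row and column sums. So the first thing I would verify is that at each invocation, the rectangle $\R$ containing $c_1$ is genuinely rectangular and contains $\R$'s first column, so that $\tau$ applies, and that $\tau$ changes the column sums exactly as needed for $\alpha$'s bookkeeping (this is the content of the ``Note'' after the description of $\tau$, that each iteration shifts one $1$ from the first to the last column).

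The two assertions to prove are that $f$ is a bijection and that $\m(M)=\m(f(M))$. For bijectivity, I would argue that each individual step is invertible: Lemma~\ref{lemmatau} already gives that $\tau$ is invertible (via $\rho \circ \tau \circ \rho$), and the shape move in algorithm $\alpha$ is reversible because the rectangle $\R$ and its leftmost column are uniquely reconstructible from the target shape. Hence the whole composition $f$, being a finite composition of invertible steps with matching domains and codomains, is a bijection. The cleanest way to present this is to observe that the sequence of intermediate shapes produced by $\alpha$ is deterministic and reversible, so one can run the inverse steps (applying $\tau^{-1}$ to the appropriate rectangle at each stage) to recover $M$ from $f(M)$.

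For the major index equality, the key observation is that $\m$ is additive over maximal rectangles via Definition~\ref{defmajal}, and $\tau$ acts only within one maximal rectangle $\R$ while preserving the filling outside it. By Lemma~\ref{lemmatau}, $\tau$ preserves the descent set of that rectangle, hence $\m(R)=\m(\tau(R))$; what I must check is that this local invariance propagates to the global major index through formula~\eqref{majmoonal}. The natural argument is that moving a column of a maximal rectangle to the right end, combined with the $\tau$ action, leaves unchanged the descent data of every maximal rectangle of the intermediate moon polyomino, so each summand $\m(R_i)$ and each overlap term $\m(R_i \cap R_{i+1})$ in~\eqref{majmoonal} is preserved. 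Concretely I would verify that the rectangles outside $\R$ are untouched and that the rectangle $\R$ itself has its major index preserved by $\tau$, so the total remains constant across every step of $\alpha$; by transitivity, $\m(M)=\m(f(M))$.

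The main obstacle I anticipate is the propagation step: while $\tau$ clearly preserves the major index of the \emph{single} rectangle $\R$ on which it acts, the definition of $\m$ for a moon polyomino involves overlaps between consecutive maximal rectangles, and moving a column changes which rectangles are ``maximal'' and how they overlap. I would therefore spend the most care showing that the descent structure used in~\eqref{majmoonal}—i.e., the NE-chains in consecutive nonempty rows of each maximal rectangle—is genuinely unaffected by the combined column-move-plus-$\tau$ operation. The cleanest route is probably to track the descent set of the relevant rectangle directly (invoking Lemma~\ref{lemmatau}'s claim that $\tau$ preserves positions of descents) and to argue that the columns outside $\R$ contribute identically before and after, so that no overlap term in~\eqref{majmoonal} can change. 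If a direct appeal to Definition~\ref{defmajal} proves awkward because of the shifting rectangle decomposition, the fallback is to use the reformulation in Definition~2.2$'$ together with the column-order invariance already established in Corollary~1 (the generating function, and indeed the distribution, is independent of column order), reducing the claim to the local statement about $\tau$.
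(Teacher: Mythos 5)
Your overall skeleton matches the paper's: reduce to a single iteration of the column move, get bijectivity from the invertibility of $\tau$ (Lemma~\ref{lemmatau}) and of $\alpha$, and compare $\m(M)$ with $\m(M')$ term by term through the maximal-rectangle decomposition of Definition~\ref{defmajal}. The bijectivity half of your argument is essentially the paper's and is fine. But the maj-preservation half has a genuine gap exactly at the point you flag as ``the main obstacle'': you assert that ``the rectangles outside $\R$ are untouched'' and that therefore every summand and overlap term in \eqref{majmoonal} is preserved. That is false as stated. The maximal rectangles of $\M$ that are \emph{taller} than $\R$ are not disjoint from $\R$; each of them meets $\R$ in a block of consecutive columns (all columns of $\R$ except the first), and $\tau$ moves $1$-cells horizontally across columns inside $\R$, so the fillings of those taller maximal rectangles genuinely change. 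Saying ``$\tau$ preserves the descent set of $\R$'' controls only the row-wise descent data, which suffices for the maximal rectangles \emph{shorter} than $\R$ (they meet $\R$ in consecutive rows); it says nothing about the column-restricted data needed for the taller ones. The paper closes this gap with a specific observation you do not supply: the horizontal shifts of a chain $D_1,\dots,D_p$ in $\tau$ can be reinterpreted as \emph{vertical} moves (each $D_i$ to the row of $D_{i-1}$, and $D_1$ to the last column in the row of $D_p$), so the restriction of $\tau(R)$ to columns $c_{j-1},\dots,c_{j+m-1}$ agrees with the restriction of $R$ to columns $c_j,\dots,c_{j+m}$ up to a permutation of empty rows, whence the descent positions in $R_i'\cap\tau(R)$ match those in $R_i\cap R$ for $i>s$. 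Without this (or an equivalent) argument, the case $i>s$ of the comparison is unproved.

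Your proposed fallback also does not work: Corollary 4.3 is a statement about the \emph{distribution} of $\m$ over $\n(\M,\s;A)$ being invariant under permuting columns; it cannot yield the pointwise identity $\m(M)=\m(f(M))$ for the particular map $f$, which is what the proposition requires. So the resolution must go through the explicit column-shift analysis above rather than through any generating-function invariance.
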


\begin{proof} The first part follows from Lemma~\ref{lemmatau} and the fact that $\alpha$ is invertible. The filling $f(M)$ is obtained after several iterations of Step 3 in the algorithm $\alpha$ combined with application of $\tau$. For the second part of the claim, it suffices to show that  maj is preserved after one such iteration. Let $M'$ be  the filling of the shape $\M'$ obtained from $M$ after one step of $f$ in which the rectangle $R$ is transformed into $\tau(R)$.  We shall use Definition 2.2 to compare $\m(M)$ and $\m(M')$.
Let $R_1, \dots, R_k$ (resp. $R_1', \dots, R_k'$) be the maximal rectangles of $M$ (resp. $M'$), 
ordered by height. In particular, $R_i$ and $R_i'$ are of same size.  Suppose $R=R_s$, then $R_s'=\tau(R)$. 
We partition the index $i$ into three intervals, according to $i< s$, $i=s$, and $i >s$, 
and compare the contribution of $R_i$'s in each interval.

\begin{enumerate}
 \item For $1 \leq i \leq s-1$, the rectangles $R_i$ and  $R_i \cap R_{i+1}$ are of 
smaller height than $R$ and contain several consecutive rows of $R$. Using Lemma~\ref{lemmatau}
we know that the descents in $R_i'\cap \tau(R)$ (resp. $R_i' \cap R_{i+1}'\cap \tau(R)$) 
$1 \leq i \leq s-1$, are in the same rows as in $R_i\cap R$ (resp. $R_i \cap R_{i+1}\cap R$). And descents formed by one cell in $R$ and one cell on the right of $R$ are preserved since 
$\tau$ preserves the row sum of $R$. 
Consequently,
\begin{equation} \label{f1}
\m(R_i)=\m(R_i'),\; \m(R_i \cap R_{i+1})=\m(R_i' \cap R_{i+1}'),\; 1 \leq i \leq s-1.
\end{equation}

\item For $i=s$, by Lemma~\ref{lemmatau}, we have
\begin{equation} \label{f2}
\m(R_s)=\m(R_s').
\end{equation}

\item For $i > s$, 
the height of the rectangle $R_i$ or $R_{i-1} \cap R_{i}$ is greater than or equal to that of $R$. 
Each of these rectangles contains several consecutive columns of $R$ excluding the first one. 
The filling of each subrectangle of $R$ consisting of the columns $c_j, \dots, c_{j+m}$ ($j>1$) is equal, up to a rearrangement of the empty rows, to the filling of the subrectangle of $\tau(R)$ consisting of the columns $c_{j-1}, \dots, c_{j+m-1}$. To see this, note that the shifting of the $1$-cells $D_1, \cdots, D_p$ horizontally in the algorithm $\tau$ can be viewed as moving the 
$1$-cell $D_i$ vertically to the row of $D_{i-1}$ and moving $D_1$ to the last column of $R^*$ in the row of $D_p$. Thus, the descent positions in the fillings  $R_i'\cap \tau(R)$ (resp. $R_{i-1}' \cap R_{i}'\cap \tau(R)$), $i > s$ are the same as in $R_i\cap R$ (resp. $R_{i-1} \cap R_{i}\cap R$). 
Finally, one checks that the descents formed by one cell in $R$ and one cell outside $R$ are 
also preserved. Therefore,
\begin{equation} \label{f3}
\m(R_i)=\m(R_i'),\; \m(R_{i-1} \cap R_{i})=\m(R_{i-1}' \cap R_{i}'),\; i >s.
\end{equation} 
\end{enumerate}
Combining~\eqref{f1},~\eqref{f2}, and~\eqref{f3} gives $\m(M)=\m(M')$.
\end{proof}

\subsubsection{Bijection $g:\n(\M,\s; A) \rightarrow \n(\N,\s'; A)$ such that
$\se(M)=\se(g(M))$}

Suppose $M \in \n(\M,\s; A)$. To obtain $g(M)$, we perform the algorithm $\alpha$ to
transform the shape $\M$ to $\N$ and change the
filling when we move columns in Step 3 so that the number of 1's
in each row and column is preserved. 

Let $R$ be the rectangular filling in Step 3 of $\alpha$ that contains the column
$c_1$ of the current filling. Suppose $c_1$ contains $k$ many
$1$-cells $\cone, \dots, C_k$ from top to bottom. Shade the
empty rows of $R$ and the cells in $R$ to the right of
$\cone, \dots, C_k$. Let $l_i$ denote the number of empty white cells
in $R$ above $C_i$. If $\R'$ is the rectangle obtained by
moving the column $c_1$ from first to last place, fill it to obtain a filling $R'$ as follows.

\begin{enumerate} 
\item The rows that were empty remain empty. Shade these rows.

\item Write $k$ many 1's in the last column from bottom to top so that
there are $l_i$ white empty cells below the $i$-th 1, $1
\leq i \leq k$.

\item Shade the cells to the left of the nonempty cells in the
last column.

\item Fill in the rest of the rectangle  by writing 1's in the unshaded
rows of $\R'$ so that the unshaded part of $R$ to the right of the first column is the same as the unshaded part of $R'$ to the left of the last column.

\end{enumerate} 
See Figure~\ref{mapg} for an example.

\begin{figure}[ht]
\begin{center}
\includegraphics[width=15cm]{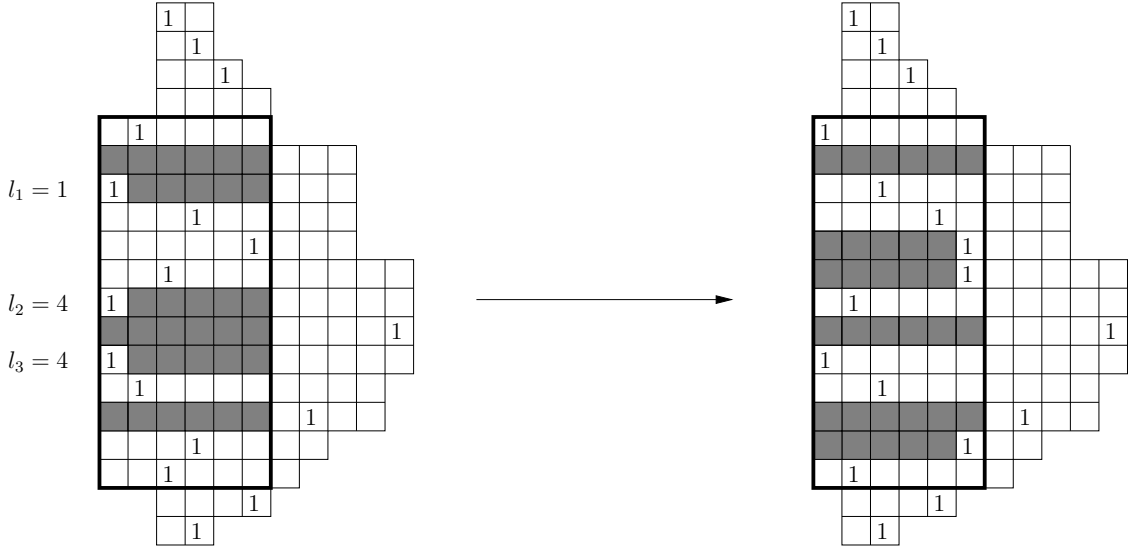}
\end{center}
\caption{One step of the map $g$.}
\label{mapg}
\end{figure}

\begin{proposition} \label{propg} The map $g:\n(\M,\s;A) \rightarrow \n(\N,\s';A)$ is a bijection and
$\se(M)=\se(g(M))$.
\end{proposition}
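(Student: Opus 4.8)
The plan is to prove Proposition~\ref{propg} by following exactly the structure used for its companion Proposition~\ref{propf}. First I would establish that $g$ is a bijection. The key observation is that the single-step rearrangement (moving the leftmost column $c_1$ of the rectangle $R$ to the right end, together with the refilling rules 1--4) is invertible: the construction is entirely driven by the data $(k; l_1,\dots,l_k)$ recording the number of $1$-cells in $c_1$ and the number of empty white cells above each, plus the unshaded sub-filling of $R$ to the right of $c_1$. Since this data is recovered verbatim from $R'$ (the shaded rows are the empty rows, the last column encodes $k$ and the $l_i$, and the unshaded part of $R'$ left of the last column equals that of $R$ right of the first column by rule~4), the step is reversible. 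As the algorithm $\alpha$ is itself invertible (Rubey's rearrangement can be run backwards), composing the inverses of the individual steps yields $g^{-1}$, so $g:\n(\M,\s;A)\to\n(\N,\s';A)$ is a bijection and clearly preserves all row and column sums.

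For the $\se$-preservation, as in Proposition~\ref{propf} it suffices to show that one step of the algorithm preserves the number of NE chains, and then to compose. So let $M'$ be obtained from $M$ by one step in which $R$ becomes $R'=g(R)$. I would split the NE chains of $M$ into three types according to how they meet the moved column $c_1$ and the rectangle $\R$: (i) chains with both cells outside $\R$; (ii) chains with both cells inside $\R$; and (iii) chains with exactly one cell inside $\R$. Type (i) chains are untouched since $g$ only alters cells of $\R$. For type (iii), the point is that $g$ preserves the row sums and the column sums, and more precisely the \emph{heights} (rows) at which the $1$-cells of $\R$ sit relative to the rows outside $\R$ are preserved: the refilling writes the same multiset of occupied rows, just redistributed among the columns of $\R'$. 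Hence a $1$-cell outside $\R$ forms an NE chain with some cell of $\R$ in $M$ if and only if it does so with the corresponding cell in $M'$, because whether a cell of $\R$ lies above-and-to-the-right of (or below-and-to-the-left of) a fixed outside cell depends only on its row and on the fact that $\R$ is a maximal rectangle with a fixed column-range inside $\M$ versus $\N$.

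The crux is the type (ii) count: the NE chains internal to $\R$ must be the same in $R$ and $R'=g(R)$. Here I would argue combinatorially from the construction. The number of NE chains inside a rectangular filling with prescribed column sums and prescribed set of nonempty rows is governed, via Kasraoui's analysis (Theorem~\ref{kastheorem} and the $h_i$ statistics), by how the $1$-cells of one column sit relative to those of the others; and the refilling rules 1--4 are designed precisely so that the ``white/shaded'' bookkeeping transfers the relative vertical arrangement of $c_1$'s cells (encoded by $l_1,\dots,l_k$) from the left end to the right end while keeping the mutual configuration of the remaining columns intact. Concretely, each original $1$-cell $C_i$ of $c_1$ contributed a certain number of NE chains determined by $l_i$ and the cells to its upper-right in $R$; after moving $c_1$ to the last position, the $i$-th $1$-cell in the new last column is placed with $l_i$ white empty cells below it (rule~2), and the shading (rule~3) together with rule~4 guarantees that the cells which formerly lay to its upper-right now lie to its upper-left or preserve their chain-relationship, so the total NE count is unchanged. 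I expect this type (ii) bookkeeping to be the main obstacle: one must verify carefully that the ``$l_i$ white cells'' convention exactly matches the change in orientation caused by sending a leftmost column to the rightmost position, and that no NE chains are spuriously created or destroyed among the unshaded columns. Once type (ii) is settled, summing the three contributions gives $\se(M)=\se(M')$ for one step, and induction on the number of iterations of $\alpha$ completes the proof that $\se(M)=\se(g(M))$.
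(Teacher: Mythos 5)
Your proposal follows essentially the same route as the paper's proof: invertibility of each step of $g$, reduction to a single column move of the algorithm $\alpha$, a split of the NE chains into those lying inside $R$ and those using at least one cell outside $R$ (the latter handled by preservation of row and column sums), and the $l_i$-bookkeeping for the moved column. The verification you flag as the ``main obstacle'' is exactly the paper's one-line observation: since $C_i$ sits in the leftmost column, it participates in precisely $l_i$ NE chains of $R$ (as the lower-left cell, one for each empty white cell above it), while the $i$-th $1$ of the last column of $R'$ participates in precisely $l_i$ NE chains (as the upper-right cell, one for each white empty cell below it), and the unshaded remainders of $R$ and $R'$ coincide, so $\se(R)=\se(R')$.
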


\begin{proof}
Clearly, every step of $g$ is invertible. To see that ne is preserved, it suffices to show that it is preserved in each step of $g$, when one column is moved from left to right. First note that $\se(R)=\se(R')$. This follows from the fact that 
the 1's in the first column of $R$  form $l_1, \dots, l_k$ NE chains from top to bottom, while 
the 1's in the last  column of $R'$ form $l_1, \dots, l_k$ NE chains from bottom to top and the remaining parts of $R$ and $R'$ are essentially equal. The numbers of NE chains in $M$ and $M'$ respectively made of at least one 1 which is outside of $R$ and $R'$ respectively are equal because each step of $g$ preserves the row and column sums.
\end{proof}

\begin{theorem}
Let $\phi$ denote the Foata-type bijection described in
Section~\ref{stack}. For a general moon polyomino $M$ the map
$\psi = g^{-1} \circ \phi \circ f:\n(\M,\s;A) \rightarrow
\n(\M,\s;A)$ is a bijection with the property
\begin{equation}\label{final}\m(M)= \se(\psi(M)) \end{equation}
\end{theorem}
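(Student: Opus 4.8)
The plan is to combine the three maps that have already been constructed and shown to have the desired individual properties. The map $f:\n(\M,\s;A) \rightarrow \n(\N,\s';A)$ is a bijection preserving maj (Proposition~\ref{propf}), the map $\phi:\n(\N,\s';A) \rightarrow \n(\N,\s';A)$ is a bijection on the left-aligned stack polyomino $\N$ sending maj to ne (Theorem~\ref{thmphi} together with the fact that $\phi$ is a bijection), and the map $g:\n(\M,\s;A) \rightarrow \n(\N,\s';A)$ is a bijection preserving ne (Proposition~\ref{propg}). Since each of $f$, $\phi$, and $g$ is a bijection, so is $g^{-1}$, and therefore the composite $\psi = g^{-1} \circ \phi \circ f$ is a bijection from $\n(\M,\s;A)$ to itself. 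This settles the bijectivity half of the statement immediately.

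For the statistic-tracking half, I would simply chase the value of maj and ne through the composition. Starting with $M \in \n(\M,\s;A)$, apply $f$ first: by Proposition~\ref{propf}, $\m(M) = \m(f(M))$, and $f(M)$ is a filling of the left-aligned stack polyomino $\N$. Next apply $\phi$: since $f(M)$ lives on $\N$, Theorem~\ref{thmphi} gives $\m(f(M)) = \se(\phi(f(M)))$. Finally, observe that $\psi(M) = g^{-1}(\phi(f(M)))$, so $\phi(f(M)) = g(\psi(M))$; by Proposition~\ref{propg}, $\se(g(\psi(M))) = \se(\psi(M))$, which means $\se(\phi(f(M))) = \se(\psi(M))$. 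Stringing these equalities together yields
\[
\m(M) = \m(f(M)) = \se(\phi(f(M))) = \se(\psi(M)),
\]
which is exactly~\eqref{final}.

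The one point that requires a small amount of care, rather than being purely formal, is the matching of domains and codomains: $f$ and $g$ both map $\n(\M,\s;A)$ to the \emph{same} target $\n(\N,\s';A)$ with the \emph{same} rearranged column-sum sequence $\s'$, so that $g^{-1}$ can be applied to the output of $\phi$ and lands back in $\n(\M,\s;A)$. This compatibility is guaranteed because $\N$ and $\s'$ are defined identically (by the algorithm $\alpha$ and the corresponding rearrangement of $\s$) for both constructions, so no reconciliation is needed. Consequently, there is no genuine obstacle here; all the difficulty was already absorbed into the proofs of Propositions~\ref{propf} and~\ref{propg} and Theorem~\ref{thmphi}, and this final theorem is essentially a bookkeeping argument assembling those pieces.
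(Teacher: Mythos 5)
Your proposal is correct and follows essentially the same route as the paper: the paper likewise notes that $f$, $g$, and $\phi$ are compatible on domains (all preserving empty rows, with $f$ and $g$ permuting columns identically to produce $\N$ and $\s'$) and then chains Proposition~\ref{propf}, Theorem~\ref{thmphi}, and Proposition~\ref{propg} exactly as you do.
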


\begin{proof}
The maps $f$ and $g$ permute the columns of $M$ together with the
corresponding number of 1's, while $\phi$ preserves the column
sums. Moreover, all three of them preserve the empty rows.
Therefore, $\psi$ is indeed a map from $\n(\M,\s;A)$ to itself.
Finally,~\eqref{final}  follows from Proposition~\ref{propf}, Proposition~\ref{propg}, and Theorem~\ref{thmphi}.
\end{proof}


\section*{Acknowledgment}

The authors would like to thank Ira Gessel~\cite{gessel} for suggesting a connection between the enumeration of permutations by major index and the theory of $P$-partitions.


\section*{Appendix}

To show that the map $\phi: \n(\F,\s;A) \rightarrow \n(\F,\s;A)$ is a bijection, it suffices to describe the inverse of the
algorithm $\gamma_r$. First, we need to determine what $\R_1$ and $\R_2$ were. For that we use the following properties of $\gamma_r$.

\begin{enumerate}

\item \label{r21} When $\gamma_r^1$ stops, the pointer $\pone$ is at
the lowest row of $\R_1$ pointing at a right $1$-cell $C_1$. The
algorithm $\gamma_r^2$ does not change the cell $C_1$ and when the
whole algorithm $\gamma_r$ terminates, there is no $1$-cell below $\cone$
that together with it forms an NE chain.

\item \label{r22} After performing $\gamma_r^2$, all the right $1$-cells
in $\R_2$ have at least one $1$-cell below them with which they form an
NE chain.

\item If $\R_2$ was nonempty then, after performing $\gamma_r$, the
lowest row of $\R_2$ contains a left $1$-cell.

\item \label{borrowing} If any borrowing occurred and the right cell
$\cone=(i_1,j_1)$ and the left cell $\ctwo=(i_2,j_2)$ were
swapped, then right after this step the cell $(i_1,j_2)$ is a
right $1$-cell in $\R_1$ which forms an NE chain with a left $1$-cell
from $\R_2$ and this is the lowest $1$-cell in $\R_1$ with this
property.

\end{enumerate}

Therefore, if $\R$ has no left $1$-cells then $\R_2=\R_1=\emptyset$. Otherwise,
find the lowest right $1$-cell $C^*$ in $\R$ such that there is no
$1$-cell in $\R$ below it that together with $C^*$ forms an NE chain.
Then, by Properties~\ref{r21} and~\ref{r22} all the nonempty rows
in $\R$ below $C^*$ are in $\R_2$. If there is no $1$-cell $C^*$ with
that property, then $\R_2$=$\R$.

\subsection*{\textbf{Algorithm $\delta_r^2$: the inverse of $\gamma_r^2$ }}

\begin{enumerate}
\item [\textsf{\textbf{(IA$'$)}}] \textsf{Initially, set $\pone$ to  the
lowest row of $\R_2$ and $\ptwo$ to the next row in $\R_2$ above
$\pone$.}
\item [\textsf{\textbf{(IB$'$)}}] \textsf{If $\ptwo$ is null then go to step
(ID$'$). Otherwise, suppose $\pone$ and $\ptwo$ are pointing at
$1$-cells $\cone$ and $\ctwo$.}
\textsf{
\begin{enumerate}
\item [\textbf{(IB$'$1)}] If $\ctwo$ is a left cell do nothing.
\item [\textbf{(IB$'$2)}] If $\ctwo$ is a right cell then swap the
cells $\cone$ and $\ctwo$.
\end{enumerate}}
\item [\textsf{\textbf{(IC$'$)}}] \textsf{Move  $\pone$ to the row of $\ptwo$ and
$\ptwo$ to the next row in $\R_2$ above it. Go to (IB$'$).}

\item [\textsf{\textbf{(ID$'$)}}] (\textit{\textbf{Inverse borrowing}})\textsf{ Note
that $\pone$ always points to a left $1$-cell. When it reaches the
highest row of $\R_2$ and points to a $1$-cell $\cone$ first we need
to determine whether there was any borrowing. For that purpose
find the lowest right $1$-cell $\ctwo$ above $\cone$ such that the two
cells form an NE chain. Using Property~\ref{borrowing} we
conclude:}
\textsf{
\begin{enumerate}
\item [\textbf{(ID$'$1)}] If there is no such a cell $\ctwo$, then there was
no borrowing so do nothing.
\item [\textbf{(ID$'$2)}] If there is such a cell $\ctwo$, and there is a
left $1$-cell in the rows between $\cone$ and $\ctwo$, then there was
no borrowing so do nothing.
\item [\textbf{(ID$'$3)}] If there is such a cell $\ctwo$, and there is
no left $1$-cell between $\cone$ and $\ctwo$, then swap $\cone$ and $\ctwo$.
\end{enumerate}}
\item [\textsf{\textbf{(IE$'$)}}] \textsf{Stop.}

\end{enumerate}

When the algorithm $\delta_r^2$ stops, continue by applying the algorithm $\delta_r^1$ on $\R \backslash \R_2$.

\subsection*{\textbf{Algorithm $\delta_r^1$: the inverse of  $\gamma_r^1$ }}

\begin{enumerate}
\item [\textsf{\textbf{(IA)}}]\textsf{ Position $\pone$ on the lowest row in $\R
\backslash \R_2$ and $\ptwo$ on the lowest nonempty row in $\R
\backslash \R_2$ above it.
\item [\textbf{(IB)}] If $\ptwo$ is null then go to step (ID).
Otherwise, suppose that $\pone$ and $\ptwo$ point at the 
$1$-cells $\cone$ and $\ctwo$ respectively.}

\begin{enumerate}
\item [\textbf{(IB1)}] \textsf{If $\ctwo$ is a left cell then check
whether there exists a right $1$-cell $R$ above $\ctwo$ in a column
longer than the column of $\cone$ such that there are no left $1$-cells between $R$ and $\ctwo$.}

\textsf{
\begin{enumerate}
\item [\textbf{(IB1.1)}]  If such a cell $R$ exists, then suppose that the  row-column
coordinates of $\cone$, $\ctwo$, and $R$ are $(i_1,j_1)$, $(i_2,j_2)$, and $(i_3,j_3)$,
respectively. Delete the 1's from these three cells and write them
in the cells with coordinates $(i_1,j_3)$, $(i_2,j_1)$, and
$(i_3,j_2)$. Move $\pone$ to the row of $\ptwo$.
\item [\textbf{(IB1.1)}] If there is no such a cell $R$, 
then swap $\cone$ and $\ctwo$ and
move $\pone$ to the row of $\ptwo$.
\end{enumerate}}

\item [\textbf{(IB2)}] \textsf{If $\ctwo$ is a right cell, then
\begin{enumerate}
\item [\textbf{(IB2.1)}] If  $|col(\cone)|=|col(\ctwo)|$ 
then move $\pone$ to the row of $\ptwo$.
\item [\textbf{(IB2.2)}] If $|col(\cone)| \neq |col(\ctwo)|$ then do nothing.
\end{enumerate} }
\end{enumerate}
\textsf{
\item [\textbf{(IC)}] Move $\ptwo$ to the next row in $\R \backslash \R_2$ above
it. Go to (IB).
\item [\textbf{(ID)}] Stop.} 

\end{enumerate}

Suppose $M'$ is a filling obtained by applying the algorithm $\gamma_r$ to the filling $M \in \n(\M,\s;A)$. Next we show that by applying $\delta_r^2$ and $\delta_r^1$ to $M'$ one obtains the filling $M$.

Steps (IB$'$1) and (IB$'$2) clearly invert the steps (C$'$2) and (C$'$1), respectively. Immediately after the borrowing step in $\gamma_r^2$, by Lemma~\ref{prop}, (b) and (c), and Property~\ref{borrowing}, we can conclude that borrowing has been performed if and only if there is a right $1$-cell in $\R \backslash \R_2$ which forms an NE chain with a left $1$-cell 
from $\R_2$ and does not form an NE chain with any left cells in $\R_1$. It is clear that step (ID$'$) detects whether there was any borrowing and inverts it if it occurred. So, the first algorithm $\delta_r^2$ is indeed the inverse of $\gamma_r^2$. 

In $\delta_r^1$, when $\ptwo$ points to a left $1$-cell we need to invert either (B1) or (B2.3). Using  Lemma~\ref{prop} (c) one sees that step (IB1) detects exactly which of (B1) and (B2.3) occurred and inverts it. If $\ptwo$ points to a right cell, it is clear  that (IB2.1) and (IB2.2) invert the steps (B2.1) and (B2.2) of $\gamma_r^1$. Moreover, if both $\pone$ and $\ptwo$ point 
to right $1$-cells, and there is no left $1$-cell above them, then clearly the algorithm 
$\delta_r^1$ leaves them unchanged. That is, the algorithm $\delta_r^1$ preserves all rows that 
are above the rows in $\R_1$.  Therefore, we have
\[ M \overset{\gamma_r}{\longrightarrow} M' \overset{\text{   }\delta_r^2 + \delta_r^1\text{   }}{\longrightarrow} M.\]
This implies that $\gamma_r:\n(\M,\s;A) \rightarrow \n(\M,\s;A)$ is injective. Since $\n(\M,\s;A)$ is finite, it follows that $\gamma_r$ is bijective.

\end{document}